\date{}
\renewcommand{\uppercasenonmath}[1]{}
\numberwithin{equation}{section} \theoremstyle{plain}
\newtheorem*{thm*}{Main Theorem}
\newtheorem{thm}{Theorem}[section]
\newtheorem{cor}[thm]{Corollary}
\newtheorem*{cor*}{Corollary}
\newtheorem{lem}[thm]{Lemma}
\newtheorem*{lem*}{Lemma}
\newtheorem*{prop*}{Proposition}
\newtheorem{rem}[thm]{Remark}
\newtheorem*{rem*}{Remark}
\newtheorem*{exa*}{Example}
\newtheorem*{df*}{Definition}
\newtheorem*{conj*}{Conjecture}
\newtheorem*{ack*}{ACKNOWLEDGEMENTS}
\newcommand{\pf}{\noindent\begin {proof}}
\newcommand{\epf}{\end{proof}}
\newcommand{\Ext}{\mbox{\rm Ext}}
\newcommand{\Hom}{\mbox{\rm Hom}}
\newcommand{\Tor}{\mbox{\rm Tor}}
\newcommand{\im}{\mbox{\rm im}}
\begin{document}
\begin{center}
{\large \bf Ding modules and dimensions over formal triangular
matrix rings}

\vspace{0.5cm} Lixin Mao\\
Department of Mathematics and Physics, Nanjing Institute of
Technology,\\ Nanjing 211167, China\\
E-mail: maolx2@hotmail.com \\
\end{center}

\bigskip
\centerline { \bf  Abstract}
 \bigskip
\leftskip10truemm \rightskip10truemm
 \noindent
Let $T=\biggl(\begin{matrix} A&0\\
U&B \end{matrix}\biggr)$ be a formal triangular matrix ring, where
$A$ and $B$ are rings and $U$ is a $(B, A)$-bimodule. We
prove that: (1) If $U_{A}$ and $_{B}U$ have finite flat dimensions, then a  left $T$-module $\biggl(\begin{matrix} M_{1}\\
M_{2}\end{matrix}\biggr)_{\varphi^{M}}$ is Ding projective if and
only if $M_{1}$ and $M_{2}/\im(\varphi^{M})$ are Ding projective and
the morphism $\varphi^{M}$ is a monomorphism. (2) If $T$ is a right
coherent ring, $_{B}U$ has finite flat dimension, $U_{A}$ is
finitely presented and  has finite projective or $FP$-injective
dimension, then a right $T$-module $(W_{1}, W_{2})_{\varphi_{W}}$ is
Ding injective if and only if $W_{1}$ and
$\ker(\widetilde{{\varphi_{W}}})$ are Ding injective and the
morphism $\widetilde{{\varphi_{W}}}$ is an epimorphism. As a
consequence, we describe Ding projective and Ding injective dimensions of a $T$-module.\\
\vbox to 0.3cm{}\\
{\it Key Words:} Formal triangular matrix ring; Ding projective
module; Ding injective module; Ding projective dimension; Ding injective dimension.\\
{\it 2010 Mathematics Subject Classification:} 16D40; 16D50; 16E10.

\leftskip0truemm \rightskip0truemm
\bigskip
\section { \bf Introduction}
\bigskip
The origin of Gorenstein homological algebra may date back to 1960s
when Auslander and Bridger introduced the concept of G-dimension for
finitely generated modules over a two-sided Noetherian ring
\cite{AB}. In 1990s, Enochs and Jenda extended the ideas of
Auslander and Bridger and introduced the concepts of Gorenstein
projective and Gorenstein  injective modules over arbitrary rings
\cite{EJ1}. In \cite{DLM, MD}, Ding, Li and Mao considered two
special cases of the Gorenstein projective and Gorenstein injective
modules, which they called strongly Gorenstein flat and Gorenstein
$FP$-injective modules respectively. These two classes of modules
over coherent rings possess many nice properties analogous to
Gorenstein projective and Gorenstein injective modules over
Noetherian rings (see \cite{DLM, G1, MD, Y, YLL}). So Gillespie
later renamed strongly Gorenstein flat as Ding projective, and
Gorenstein $FP$-injective as Ding injective (see \cite{G1} for
details).

Let $A$ and $B$ be rings and $U$ be a $(B, A)$-bimodule. $T=\biggl(\begin{matrix} A&0\\
U&B \end{matrix}\biggr)$ is known as a \emph{formal triangular
matrix ring} with usual matrix addition and multiplication. Formal
triangular matrix rings play an important role in ring theory and
the representation theory of algebra \cite{ARS}. This kind of rings
are often used to construct examples and counterexamples, which make
the theory of rings and modules more abundant and concrete. So the
properties of formal triangular matrix rings and modules over them
have deserved more and more interests (see \cite{AS, ARS, EIT, FGR},
\cite{G2}-\cite{HV1}, \cite{KT, LC, Z, ZLW}). For example, Zhang
\cite{Z} explicitly described the Gorenstein projective modules over
a triangular matrix Artin algebra. Enochs and other authors
\cite{EIT} characterized when a left module over a triangular matrix
ring is Gorenstein projective or Gorenstein injective under the
``Gorenstein regular" condition. Zhu, Liu and Wang \cite{ZLW} also
investigated Gorenstein homological dimensions of modules over
triangular matrix rings under the ``Gorenstein regular" condition.

The present paper is devoted to  Ding projective and Ding injective
modules and dimensions over formal triangular matrix rings.

In Section 3, let  $U_{A}$ and $_{B}U$ have finite flat dimensions, we prove that a  left $T$-module $M=\biggl(\begin{matrix} M_{1}\\
M_{2}\end{matrix}\biggr)_{\varphi^{M}}$ is Ding projective if and
only if $M_{1}$ is a Ding projective left $A$-module,
$M_{2}/\im(\varphi^{M})$ is a Ding projective left $B$-module and
the morphism $\varphi^{M}$ is a monomorphism. As a consequence, we
prove that, if $B$ has finite left global Ding projective dimension,
$U_{A}$ has finite flat dimension and $_{B}U$ is projective, then
max$\{Dpd(M_{1}),Dpd(M_{2})\}\leq Dpd(M)\leq$
max$\{Dpd(M_{1})+1,Dpd(M_{2})\}$.

In Section 4, let $T$ be a right coherent ring, $_{B}U$ have finite
flat dimension, $U_{A}$  be finitely presented  and have finite
projective or $FP$-injective dimension, we obtain that  a right
$T$-module $W=(W_{1}, W_{2})_{\varphi_{W}}$ is Ding injective if and
only if $W_{1}$ is a Ding injective right $A$-module,
$\ker(\widetilde{{\varphi_{W}}})$ is a Ding injective right
$B$-module and the morphism $\widetilde{{\varphi_{W}}}$ is an
epimorphism. As a consequence, we get that, if $T$ is a right
coherent ring, $B$ has finite right global Ding injective dimension,
$_{B}U$ is flat, $U_{A}$ is finitely presented  and has finite
projective or $FP$-injective dimension, then
max$\{Did(W_{1}),Did(W_{2})\}\leq Did(W)\leq$
max$\{Did(W_{1})+1,Did(W_{2})\}$.
\bigskip
\section {\bf Preliminaries}

Throughout this paper, all rings are nonzero associative rings with
identity and all modules are unitary. For a ring $R$, we write
$R$-Mod (resp. Mod-$R$) for the category of left (resp. right) $R$-modules. $_RM$ (resp. $M_{R}$) denotes a left (resp. right)
$R$-module. The character module $\Hom_{\mathbb{Z}}(M, \mathbb{Q}/\mathbb{Z})$
of a module $M$  is denoted by $M^{+}$. $pd(M)$, $id(M)$ and $fd(M)$
denote the projective, injective and flat dimensions of a module $M$
respectively. $T=\biggl(\begin{matrix} A&0\\U&B \end{matrix}\biggr)$ always means a formal triangular matrix
ring, where $A$ and $B$ are rings and $U$ is a $(B, A)$-bimodule.

A left $R$-module $M$ is called \emph{Ding projective} \cite{DLM,
G1} (resp. \emph{Gorenstein projective} \cite{EJ1}) if there is an
exact sequence $\cdots\rightarrow P^{-2}\rightarrow
P^{-1}\rightarrow P^{0}\rightarrow P^{1}\rightarrow \cdots$ of
projective left $R$-modules with $M=\ker(P^{0}\rightarrow P^{1})$,
which remains exact after applying $\Hom_{R}(-, B)$ for any flat
(resp. projective)  left $R$-module $B$.

A right $R$-module $X$ is called \emph{$FP$-injective} \cite{S} if
$\Ext_{R}^1(N, X)=0$ for every finitely presented right $R$-module
$N$. The \emph{$FP$-injective dimension} of a right $R$-module $X$,
denoted by $FP$-$id(X)$, is defined to be the smallest integer
$n\geq 0$ such that $\Ext_{R}^{n+1}(N,X)=0$ for every finitely
presented right $R$-module $N$ (if no such $n$ exists, set
$FP$-$id(X)=\infty$).

A right  $R$-module $N$ is called \emph{Ding injective} \cite{G1,
MD} (resp. \emph{Gorenstein injective} \cite{EJ1}) if there is an
exact sequence $\cdots\rightarrow E^{-1}\rightarrow E^{0}\rightarrow
E^{1}\rightarrow E^{2}\rightarrow \cdots$ of injective right
$R$-modules with $N=\ker(E^{0}\rightarrow E^{1})$, which remains
exact after applying $\Hom_{R}(G, -)$ for any $FP$-injective (resp.
injective) right $R$-module $G$.

By \cite[Theorem 1.5]{G}, the category $T$-Mod of left $T$-modules
is equivalent to the category $\Omega$ whose objects are triples
$M=\biggl(\begin{matrix}M_{1}\\M_{2}\end{matrix}\biggr)_{\varphi^{M}}$,
where $M_{1}\in A$-Mod, $M_{2}\in B$-Mod and $\varphi^{M}:
U\otimes_{A} M_{1} \rightarrow M_{2}$ is a $B$-morphism, and whose
morphisms from
$\biggl(\begin{matrix} M_{1}\\M_{2}\end{matrix}\biggr)_{\varphi^{M}}$ to $\biggl(\begin{matrix} N_{1}\\
N_{2}\end{matrix}\biggr)_{\varphi^{N}}$ are pairs
$\biggl(\begin{matrix} f_{1}\\f_{2}\end{matrix}\biggr)$  such that
$f_{1}\in\Hom_{A}(M_{1}, N_{1}), f_{2}\in\Hom_{B}(M_{2}, N_{2})$
satisfying that the following diagram  is commutative. $$\xymatrix{U\otimes_{A} M_{1}\ar[d]_{\varphi^{M}}\ar[r]^{1\otimes f_{1}}&U\otimes_{A} N_{1}\ar[d]^{\varphi^{N}}\\
M_{2}\ar[r]^{f_{2}}&N_{2}.}$$ Given a triple $M=\biggl(\begin{matrix} M_{1}\\
M_{2}\end{matrix}\biggr)_{\varphi^{M}}$ in $\Omega$, we shall denote
by $\widetilde{{\varphi^{M}}}$  the $A$-morphism from $M_{1}$ to
$\Hom_{B}(U, M_{2})$ given by $\widetilde{{\varphi^{M}}}(x)(u)
=\varphi^{M} (u\otimes x)$ for each $u \in U$ and $x\in M_{1}$.

Note that a sequence $0\rightarrow\biggl(\begin{matrix} M'_{1}\\
M'_{2}\end{matrix}\biggr)_{\varphi^{M'}}\rightarrow\biggl(\begin{matrix} M_{1}\\
M_{2}\end{matrix}\biggr)_{\varphi^{M}}\rightarrow\biggl(\begin{matrix} M''_{1}\\
M''_{2}\end{matrix}\biggr)_{\varphi^{M''}}\rightarrow 0$ of left
$T$-modules is exact if and only if both sequences $0\rightarrow
M'_{1}\rightarrow M_{1}\rightarrow M''_{1}\rightarrow 0$ and
$0\rightarrow M'_{2}\rightarrow M_{2}\rightarrow M''_{2}\rightarrow
0$ are exact.

Recall that the \emph{product category} $A$-Mod $\times B$-Mod is
defined as follows: An object of $A$-Mod $\times B$-Mod is a pair
$(M,N)$ with $M\in A$-Mod and $N\in B$-Mod, a morphism from $(M,N)$
to $(M',N')$ is a pair $(f,g)$ with $f\in\Hom_{A}(M,M')$ and
$g\in\Hom_{B}(N,N')$.

There are some functors between the category $T$-Mod and the product
category $A$-Mod $\times B$-Mod as follows:

(1) $\textbf{p}: A$-Mod $\times B$-Mod $\rightarrow T$-Mod is
defined as follows: for each object $(M_{1}, M_{2})$ of $A$-Mod
$\times B$-Mod, let $\textbf{p}(M_{1}, M_{2}) =\biggl(\begin{matrix} M_{1}\\
(U\otimes_{A} M_{1})\oplus M_{2}\end{matrix}\biggr)$ with the
obvious map and for any morphism $(f_{1}, f_{2})$ in $A$-Mod $\times B$-Mod, let $\textbf{p}(f_{1}, f_{2}) =\biggl(\begin{matrix} f_{1}\\
(1\otimes_{A} f_{1})\oplus f_{2}\end{matrix}\biggr)$.

(2) $\textbf{h}: A$-Mod $\times B$-Mod $\rightarrow T$-Mod is
defined as follows: for each object $(M_{1}, M_{2})$ of $A$-Mod
$\times B$-Mod, let $\textbf{h}(M_{1}, M_{2}) =\biggl(\begin{matrix} M_{1}\oplus\Hom_{B}(U,M_{2})\\
M_{2}\end{matrix}\biggr)$ with the obvious map and for any morphism
$(f_{1}, f_{2})$ in $A$-Mod $\times B$-Mod, let $h(f_{1}, f_{2})=\biggl(\begin{matrix} f_{1}\oplus\Hom_{B}(U,f_{2})\\
f_{2}\end{matrix}\biggr)$.

(3) $\textbf{q}: T$-Mod $\rightarrow A$-Mod $\times B$-Mod is
defined, for each left $T$-module $\biggl(\begin{matrix} M_{1}\\
M_{2}\end{matrix}\biggr)$ as $\textbf{q} \biggl(\begin{matrix} M_{1}\\
M_{2}\end{matrix}\biggr)=(M_{1}, M_{2})$, and for each morphism
$\biggl(\begin{matrix} f_{1}\\f_{2}\end{matrix}\biggr)$ in $T$-Mod as $\textbf{q} \biggl(\begin{matrix} f_{1}\\
f_{2}\end{matrix}\biggr)=(f_{1}, f_{2})$.

It is easy to see that \textbf{p} is a left adjoint of \textbf{q, h}
is a right adjoint of \textbf{q}.

Analogously, the category Mod-$T$ of right $T$-modules is equivalent
to the category $\Gamma$ whose objects are triples
$W=(W_{1},W_{2})_{\varphi_{W}}$, where $W_{1}\in$ Mod-$A$,
$W_{2}\in$ Mod-$B$ and $\varphi_{W}: W_{2}\otimes_{B} U \rightarrow
W_{1}$ is an $A$-morphism, and whose morphisms from
$(W_{1},W_{2})_{\varphi_{W}}$ to $(X_{1},X_{2})_{\varphi_{X}}$ are
pairs $(g_{1},g_{2})$  such that $g_{1}\in\Hom_{A}(W_{1}, X_{1}),
g_{2}\in\Hom_{B}(W_{2}, X_{2})$  and $\varphi_{X}(g_{2}\otimes
1)=g_{1}\varphi_{W}$. Given such a triple
$W=(W_{1},W_{2})_{\varphi_{W}}$ in $\Gamma$, we shall denote by
$\widetilde{{\varphi_{W}}}$ the $B$-morphism from $W_{2}$ to
$\Hom_{A}(U, W_{1})$ given by $\widetilde{{\varphi_{W}}}(y)(u)
=\varphi_{W} (y\otimes u)$ for each $u \in U$ and $y\in W_{2}$.
There exist similar functors $\textbf{p},\textbf{q},\textbf{h}$
between the category Mod-$T$ and the product category Mod-$A$
$\times$ Mod-$B$.

In the rest of the paper we shall identify $T$-Mod  (resp. Mod-$T$)
with this category $\Omega$ (resp. $\Gamma$) and, whenever there is
no possible confusion, we shall omit the morphism $\varphi^{M}$
(resp. $\varphi_{W}$).
\bigskip
\section {\bf Ding projective modules and dimensions}
\bigskip
We start with several lemmas.
\begin{lem} \label{lem: 3.1}The following conditions are equivalent:
\begin{enumerate}\item $X$ is a Ding projective left $R$-module. \item There is an exact
sequence $\cdots\rightarrow P^{-1}\rightarrow P^{0}\rightarrow
P^{1}\rightarrow P^{2}\rightarrow \cdots$ of projective left
$R$-modules with $X=\ker(P^{0}\rightarrow P^{1})$, , which remains
exact after applying $\Hom_{R}(-, G)$ for each left $R$-module $G$
with finite flat dimension.
\end{enumerate}
\end{lem}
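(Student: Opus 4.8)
The plan is to prove the two implications separately, carrying out the substantive direction (1)$\Rightarrow$(2) by induction on the flat dimension of the test module.

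The implication (2)$\Rightarrow$(1) is immediate. Every flat left $R$-module has flat dimension zero, hence finite flat dimension, so the complex furnished by (2) automatically remains exact after applying $\Hom_{R}(-,B)$ for every flat $B$; this is exactly the condition in the definition of Ding projective recalled just before the lemma. Thus (2) yields (1) using the very same complex.

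For (1)$\Rightarrow$(2), let $X$ be Ding projective and fix an exact complex of projective left $R$-modules
$$P^{\bullet}:\quad \cdots\rightarrow P^{-1}\rightarrow P^{0}\rightarrow P^{1}\rightarrow P^{2}\rightarrow\cdots$$
with $X=\ker(P^{0}\rightarrow P^{1})$ that stays exact under $\Hom_{R}(-,B)$ for each flat $B$. I will show that this same complex already witnesses (2); concretely, that $\Hom_{R}(P^{\bullet},G)$ is exact for every left $R$-module $G$ with $fd(G)=n<\infty$, arguing by induction on $n$. The base case $n=0$ is precisely the Ding projective hypothesis. For the inductive step, suppose $n\geq 1$ and choose a short exact sequence $0\rightarrow K\rightarrow F\rightarrow G\rightarrow 0$ with $F$ flat; a standard dimension shift via the long exact sequence for $\Tor$ gives $fd(K)=n-1$. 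Since each $P^{i}$ is projective, $\Hom_{R}(P^{i},-)$ is exact, so applying $\Hom_{R}(P^{\bullet},-)$ produces a short exact sequence of cochain complexes
$$0\rightarrow\Hom_{R}(P^{\bullet},K)\rightarrow\Hom_{R}(P^{\bullet},F)\rightarrow\Hom_{R}(P^{\bullet},G)\rightarrow 0.$$
The associated long exact cohomology sequence, together with the exactness of $\Hom_{R}(P^{\bullet},F)$ (as $F$ is flat) and of $\Hom_{R}(P^{\bullet},K)$ (by the induction hypothesis, since $fd(K)=n-1$), sandwiches the cohomology of $\Hom_{R}(P^{\bullet},G)$ between vanishing terms and forces it to vanish as well.

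The only point requiring care is the interplay of the two resolutions: one must ensure that $\Hom_{R}(P^{i},-)$ preserves exactness, which is guaranteed by projectivity of the $P^{i}$, so that the short exact sequence of complexes—and hence its long exact cohomology sequence—is genuinely available. Once this is in place the argument is routine bookkeeping, and I expect no essential obstacle beyond organizing the dimension-shifting induction.
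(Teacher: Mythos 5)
Your proof is correct and takes essentially the same approach as the paper: both reduce exactness of $\Hom_{R}(P^{\bullet},G)$ to exactness of $\Hom_{R}(P^{\bullet},-)$ on flat modules, using projectivity of the $P^{i}$ to obtain exact sequences of complexes, with the converse noted as trivial since flat modules have finite flat dimension. The only difference is organizational: the paper applies $\Hom_{R}(P^{\bullet},-)$ to an entire finite flat resolution of $G$ at once and cites an acyclicity result from Rotman, whereas you unwind the same argument as a dimension-shifting induction via the long exact cohomology sequence.
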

\begin{proof}It is enough to show that $(1) \Rightarrow (2)$.
There is an exact sequence $\Lambda: \cdots\rightarrow
P^{-1}\rightarrow P^{0}\rightarrow P^{1}\rightarrow P^{2}\rightarrow
\cdots$ of projective left $R$-modules with $X=\ker(P^{0}\rightarrow
P^{1})$, which remains exact after applying  $\Hom_{R}(-, F)$ for
each flat  left $R$-module $F$. Let $fd(G)=n<\infty$. Then there is
an exact sequence $0\rightarrow F_{n}\rightarrow F_{n-1}
\rightarrow\cdots\rightarrow F_{1}\rightarrow F_{0}\rightarrow
G\rightarrow 0$ with each $F_{i}$ flat. So we get the exact sequence
of complexes
$0\rightarrow\Hom_{R}(\Lambda,F_{n})\rightarrow\cdots\rightarrow\Hom_{R}(\Lambda,F_{0})\rightarrow\Hom_{R}(\Lambda,G)\rightarrow
0$. By \cite[Theorem 6.3]{R}, $\Hom_{R}(\Lambda,G)$ is exact since
$\Hom_{R}(\Lambda,F_{n}),\cdots,\Hom_{R}(\Lambda,F_{0})$ are exact.
\end{proof}
\begin{lem} \label{lem: 3.2}Let $_{B}U$  have finite flat
dimension.\begin{enumerate}\item If $E$ is an injective right
$A$-module, then the right $B$-module $\Hom_{A}(U,E)$ has finite
injective dimension.\item If $F$ is a flat left $A$-module, then the
left $B$-module $U\otimes_{A}F$ has finite flat dimension.
\end{enumerate}
\end{lem}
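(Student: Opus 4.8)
The plan is to bound both invariants by $n := fd(_{B}U)$, using in each case the appropriate tensor--hom adjunction together with the exactness supplied by the given flat or injective module.

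For part (2), the plan is to compute $\Tor$ against an arbitrary right $B$-module. I would fix a right $B$-module $N$ together with a projective resolution $P_{\bullet}\to N$ in Mod-$B$. Using the associativity isomorphism $P_{\bullet}\otimes_{B}(U\otimes_{A}F)\cong(P_{\bullet}\otimes_{B}U)\otimes_{A}F$ and the flatness of $_{A}F$ (so that $-\otimes_{A}F$ is exact and may be pulled through homology), I obtain $\Tor_{i}^{B}(N,U\otimes_{A}F)\cong\Tor_{i}^{B}(N,U)\otimes_{A}F$ for all $i$. Since $fd(_{B}U)=n$, the right-hand side vanishes for $i>n$; as $N$ is arbitrary, this yields $fd(_{B}(U\otimes_{A}F))\leq n<\infty$.

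For part (1), the plan is the dual computation, now with $\Ext$ out of an arbitrary right $B$-module. Fixing a right $B$-module $N$ and a projective resolution $P_{\bullet}\to N$ in Mod-$B$, the adjunction $-\otimes_{B}U\dashv\Hom_{A}(U,-)$ gives a natural isomorphism of complexes $\Hom_{B}(P_{\bullet},\Hom_{A}(U,E))\cong\Hom_{A}(P_{\bullet}\otimes_{B}U,E)$. Because $E$ is an injective right $A$-module, $\Hom_{A}(-,E)$ is exact and carries homology to cohomology, so $\Ext_{B}^{i}(N,\Hom_{A}(U,E))\cong\Hom_{A}(\Tor_{i}^{B}(N,U),E)$ for all $i$. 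As $\Tor_{i}^{B}(N,U)=0$ for $i>n$, we get $\Ext_{B}^{i}(N,\Hom_{A}(U,E))=0$ for all $i>n$ and all right $B$-modules $N$, whence $id(\Hom_{A}(U,E)_{B})\leq n<\infty$.

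The only point requiring care---rather than a genuine obstacle---is justifying that homology may legitimately be pulled through the functors $-\otimes_{A}F$ and $\Hom_{A}(-,E)$, so that the displayed isomorphisms hold at the level of (co)homology and not merely of complexes. This is precisely where the flatness of $_{A}F$ and the injectivity of $E_{A}$ enter, and it is what allows the finiteness hypothesis on $_{B}U$ to be transferred to $U\otimes_{A}F$ and to $\Hom_{A}(U,E)$.
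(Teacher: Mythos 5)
Your proof is correct and follows essentially the same route as the paper: both parts rest on the change-of-rings isomorphisms $\Tor_{i}^{B}(N,U\otimes_{A}F)\cong\Tor_{i}^{B}(N,U)\otimes_{A}F$ and $\Ext_{B}^{i}(N,\Hom_{A}(U,E))\cong\Hom_{A}(\Tor_{i}^{B}(N,U),E)$, which the paper simply cites from Rotman while you derive them from associativity, adjunction, and the exactness of $-\otimes_{A}F$ and $\Hom_{A}(-,E)$.
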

\begin{proof}Let $fd(_{B}U)=n<\infty$.

(1) By \cite[p.360]{R}, for any right $B$-module $X$, we have
$$\Ext_{B}^{n+1}(X,\Hom_{A}(U,E))\cong\Hom_{A}(\Tor^{B}_{n+1}(X,U),E)=0.$$
Hence $id(\Hom_{A}(U,E))\leq n<\infty$.

(2) By \cite[Theorem 9.48]{R}, for any right $B$-module $X$, we have
$$\Tor_{n+1}^{B}(X, U\otimes_{A} F)\cong \Tor_{n+1}^{B}(X,
U)\otimes_{A} F=0.$$ So $fd(U\otimes_{A}F)\leq n<\infty$.
\end{proof}
\begin{lem} \label{lem: 3.3}Let $M=\biggl(\begin{matrix} M_{1}\\
M_{2}\end{matrix}\biggr)_{\varphi^{M}}$ be a  left $T$-module.
\begin{enumerate}\item \cite[Theorem 3.1]{HV1} $M$ is a projective
left $T$-module if and only if $M_{1}$ is a projective left
$A$-module, $M_{2}/\im(\varphi^{M})$ is a projective left $B$-module
and $\varphi^{M}$ is a monomorphism.\item \cite[Proposition
1.14]{FGR} $M$ is a flat left $T$-module if and only if $M_{1}$ is a
flat left $A$-module, $M_{2}/\im(\varphi^{M})$ is a flat left
$B$-module and $\varphi^{M}$ is a monomorphism.
\end{enumerate}
\end{lem}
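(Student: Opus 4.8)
The plan is to handle both parts uniformly through the adjoint pair $(\textbf{p},\textbf{q})$, using that the three conditions in each part are detected componentwise and are additive for coproducts (since $U\otimes_A-$ and $\coker$ commute with coproducts and with the formation of the triple $M$). Two structural facts drive everything: $\textbf{q}$ is exact, so its left adjoint $\textbf{p}$ preserves projectives and, being a left adjoint, preserves all colimits, in particular filtered ones; and $\textbf{p}(A^{(K)},B^{(K)})=\bigl(\begin{smallmatrix}A^{(K)}\\ U^{(K)}\oplus B^{(K)}\end{smallmatrix}\bigr)$ is precisely the free left $T$-module on $K$, whose structure map is the canonical split monomorphism $U^{(K)}\hookrightarrow U^{(K)}\oplus B^{(K)}$.

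For the ``only if'' direction of (1), I would observe that a projective left $T$-module $M$ is a direct summand of a free module $F=\bigl(\begin{smallmatrix}A^{(K)}\\ U^{(K)}\oplus B^{(K)}\end{smallmatrix}\bigr)$. For $F$ itself $M_1=A^{(K)}$ is projective, $\varphi^F$ is monic, and $\coker(\varphi^F)=B^{(K)}$ is projective, so it remains only to check that each of these three properties passes to a direct summand: ``$M_1$ projective'' and ``$\coker(\varphi^M)$ projective'' because a summand of a projective is projective, and ``$\varphi^M$ monic'' because the structure map of a direct sum of triples is the direct sum of the structure maps. For the ``if'' direction, assume $M_1$ and $N:=M_2/\im(\varphi^M)$ are projective and $\varphi^M$ is monic. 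Then the short exact sequence of left $B$-modules $0\to U\otimes_A M_1\xrightarrow{\varphi^M} M_2\to N\to 0$ splits, and under a splitting $\varphi^M$ becomes the canonical inclusion, whence $M\cong\textbf{p}(M_1,N)$. Since $(M_1,N)$ is a projective object of $A$-Mod $\times\,B$-Mod and $\textbf{p}$ preserves projectives, $M$ is projective.

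For (2), I would replay both directions with projectivity replaced by flatness. For necessity, write a flat $M$ as a filtered colimit $M\cong\varinjlim F_\lambda$ of free left $T$-modules (Govorov--Lazard). The functors $M\mapsto M_1$ and $M\mapsto M_2$, and hence $M\mapsto\coker(\varphi^M)$, commute with filtered colimits, so applying (1) to each $F_\lambda$ exhibits $M_1$ and $\coker(\varphi^M)$ as filtered colimits of free modules, hence flat, while $\varphi^M=\varinjlim\varphi^{F_\lambda}$ is monic because filtered colimits are exact. For sufficiency, assume $M_1$ and $N:=\coker(\varphi^M)$ are flat and $\varphi^M$ is monic; the monomorphism produces a short exact sequence of left $T$-modules $0\to\textbf{p}(M_1,0)\to M\to\textbf{p}(0,N)\to 0$, in which $\textbf{p}(M_1,0)=\bigl(\begin{smallmatrix}M_1\\ U\otimes_A M_1\end{smallmatrix}\bigr)$ has identity structure map and $\textbf{p}(0,N)=\bigl(\begin{smallmatrix}0\\ N\end{smallmatrix}\bigr)$. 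Writing $M_1$ and $N$ as filtered colimits of finitely generated free modules and using that the left adjoint $\textbf{p}$ preserves filtered colimits realizes $\textbf{p}(M_1,0)$ and $\textbf{p}(0,N)$ as filtered colimits of the finitely generated projective $T$-modules $\textbf{p}(A^{n},0)$ and $\textbf{p}(0,B^{m})$, hence as flat $T$-modules; since flatness is closed under extensions, $M$ is flat.

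The componentwise bookkeeping of kernels, cokernels and structure maps in $\Omega$ is routine. The one step needing genuine care is the sufficiency in (2): unlike the projective case, the sequence $0\to U\otimes_A M_1\to M_2\to N\to 0$ need not split when $N$ is merely flat, so $M$ is in general not isomorphic to $\textbf{p}(M_1,N)$, and I must instead present $M$ as an extension of the two generator-type modules $\textbf{p}(M_1,0)$ and $\textbf{p}(0,N)$ and invoke closure of flatness under extensions. Establishing that these two modules are flat, via the colimit-preservation of the left adjoint $\textbf{p}$ together with Govorov--Lazard, is the crux of the argument.
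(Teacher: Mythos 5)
Your proof is correct, but there is nothing in the paper to measure it against: the paper does not prove Lemma 3.3 at all, it simply quotes part (1) from [HV1, Theorem 3.1] and part (2) from [FGR, Proposition 1.14]. Your argument is a sound, self-contained replacement, and every step checks out. The identification of free modules is right: $T\cong\textbf{p}(A,B)$ as left $T$-modules, via the column decomposition $Te_{11}\cong\bigl(\begin{smallmatrix}A\\ U\end{smallmatrix}\bigr)$, $Te_{22}\cong\bigl(\begin{smallmatrix}0\\ B\end{smallmatrix}\bigr)$, so the free modules are exactly the $\textbf{p}(A^{(K)},B^{(K)})$; $\textbf{p}$ preserves projectives because its right adjoint $\textbf{q}$ is exact; the three conditions of (1) pass to direct summands because biproducts of triples are formed componentwise; and the splitting trick legitimately yields $M\cong\textbf{p}(M_{1},N)$ when $N$ is projective. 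In (2) you correctly flagged the one real pitfall (a flat $N$ gives no splitting, so $M\not\cong\textbf{p}(M_{1},N)$ in general) and repaired it with the extension $0\to\textbf{p}(M_{1},0)\to M\to\textbf{p}(0,N)\to 0$, Govorov--Lazard, preservation of filtered colimits by $\textbf{p}$ and by the component functors (all are left adjoints: $\textbf{p}\dashv\textbf{q}\dashv\textbf{h}$), exactness of filtered colimits, and closure of flatness under extensions. It is worth noting that your toolkit is precisely the one the paper deploys later: the short exact sequence $0\to\bigl(\begin{smallmatrix}M_{1}\\ U\otimes_{A}M_{1}\end{smallmatrix}\bigr)\to M\to\bigl(\begin{smallmatrix}0\\ M_{2}/\im(\varphi^{M})\end{smallmatrix}\bigr)\to 0$ and the adjunction $(\textbf{p},\textbf{q})$ are the engine of the proof of Theorem 3.4, so your proof of the lemma makes the paper more self-contained at no conceptual cost, and in addition makes visible why the flat case genuinely requires a colimit-plus-extension argument rather than the summand argument of the projective case.
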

Now we describe explicitly the structure of a Ding projective left
$T$-module.
\begin{thm} \label{thm: 3.4}Let $U_{A}$ and $_{B}U$ have finite flat dimensions and $M=\biggl(\begin{matrix} M_{1}\\
M_{2}\end{matrix}\biggr)_{\varphi^{M}}$ be a  left $T$-module. The
following  conditions are equivalent:\begin{enumerate}\item $M$ is a
Ding projective left $T$-module.\item $M_{1}$ is a Ding projective
left $A$-module, $M_{2}/\im(\varphi^{M})$ is a Ding projective left
$B$-module and $\varphi^{M}$ is a monomorphism.
\end{enumerate}
In this case, $U\otimes_{A} M_{1}$ is Ding projective left
$B$-module if and only if $M_{2}$ is Ding projective left
$B$-module.
\end{thm}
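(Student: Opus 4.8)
The plan is to establish the equivalence $(1)\Leftrightarrow(2)$ first and then read off the final assertion from closure properties of Ding projective $B$-modules. The technical heart of both implications is this observation: if $P^{\bullet}_{1}=(\cdots\to P^{-1}_{1}\to P^{0}_{1}\to P^{1}_{1}\to\cdots)$ is \emph{any} exact complex of flat left $A$-modules that is unbounded on the right, then $U\otimes_{A}P^{\bullet}_{1}$ is again exact. Indeed, writing $Z^{j}$ for the cycle modules and splicing the short exact sequences $0\to Z^{j}\to P^{j}_{1}\to Z^{j+1}\to 0$, a dimension shift gives $\Tor_{1}^{A}(U,Z^{j})\cong\Tor_{d+1}^{A}(U,Z^{j+d})$ with $d=fd(U_{A})<\infty$, and the right-hand side vanishes, so $H^{j}(U\otimes_{A}P^{\bullet}_{1})=\Tor_{1}^{A}(U,Z^{j+1})=0$. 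I will use this together with the fact (Lemma~\ref{lem: 3.3}(1)) that every projective left $T$-module is isomorphic to $\mathbf{p}(P_{1},R)=\bigl(\begin{smallmatrix}P_{1}\\(U\otimes_{A}P_{1})\oplus R\end{smallmatrix}\bigr)$ with $P_{1},R$ projective, so that a $T$-morphism $\mathbf{p}(P_{1},R)\to\mathbf{p}(P'_{1},R')$ is a triple $(f_{1},g,h)$ with $f_{1}\colon P_{1}\to P'_{1}$, $g\colon R\to U\otimes_{A}P'_{1}$, $h\colon R\to R'$.

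For $(1)\Rightarrow(2)$, fix a complete projective resolution $P^{\bullet}$ of $M$ over $T$ and apply the exact functor $\mathbf{q}$ to get exact complexes $P^{\bullet}_{1}$, $P^{\bullet}_{2}$ of projectives with kernels $M_{1}$, $M_{2}$ at degree $0$. The triple description gives $P^{\bullet}_{2}\cong(U\otimes_{A}P^{\bullet}_{1})\oplus R^{\bullet}$ as graded modules and a degreewise split exact sequence of complexes $0\to U\otimes_{A}P^{\bullet}_{1}\to P^{\bullet}_{2}\to R^{\bullet}\to 0$ with $R^{\bullet}$ a complex of projective $B$-modules. Since $P^{\bullet}_{2}$ is exact and $U\otimes_{A}P^{\bullet}_{1}$ is exact by the observation above, $R^{\bullet}$ is exact; taking kernels at degree $0$ shows $\varphi^{M}$ is a monomorphism and identifies $\ker(R^{0}\to R^{1})$ with $N:=M_{2}/\im(\varphi^{M})$. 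To get the two Ding projectivity statements I test against the flat $T$-modules $\mathbf{p}(F_{1},F_{2})$ (flat by Lemma~\ref{lem: 3.3}(2)); feeding the triple description into $\Hom_{T}(P^{\bullet},\mathbf{p}(F_{1},F_{2}))$ produces a short exact sequence of complexes $0\to\Hom_{B}(R^{\bullet},U\otimes_{A}F_{1})\to\Hom_{T}(P^{\bullet},\mathbf{p}(F_{1},F_{2}))\to\Hom_{A}(P^{\bullet}_{1},F_{1})\oplus\Hom_{B}(R^{\bullet},F_{2})\to 0$ whose middle term is exact. With $F_{1}=0$ this collapses to $\Hom_{T}(P^{\bullet},\mathbf{p}(0,F_{2}))\cong\Hom_{B}(R^{\bullet},F_{2})$, so $R^{\bullet}$ stays exact under $\Hom_{B}(-,F_{2})$ for every flat $F_{2}$, whence $N$ is Ding projective over $B$. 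Knowing this, I take $F_{2}=0$: the module $U\otimes_{A}F_{1}$ has finite flat dimension over $B$ by Lemma~\ref{lem: 3.2}(2), and since $R^{\bullet}$ is a complete resolution of the Ding projective module $N$, Lemma~\ref{lem: 3.1} gives that $\Hom_{B}(R^{\bullet},U\otimes_{A}F_{1})$ is exact; the long exact sequence then forces $\Hom_{A}(P^{\bullet}_{1},F_{1})$ to be exact, so $M_{1}$ is Ding projective over $A$. This is the step I expect to be the main obstacle: the flat test modules do not split $\Hom_{T}(P^{\bullet},-)$ as a direct sum of complexes, and the correct order of the argument (deduce $N$ first, then invoke Lemma~\ref{lem: 3.1} to kill the mixed term $U\otimes_{A}F_{1}$) is exactly what makes it go through.

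For $(2)\Rightarrow(1)$ I first record that $\mathbf{p}(X_{1},X_{2})$ is Ding projective over $T$ whenever $X_{1}$ is Ding projective over $A$ and $X_{2}$ is Ding projective over $B$: taking complete resolutions $P^{\bullet}_{1}$, $Q^{\bullet}$ of $X_{1}$, $X_{2}$, the complex $\mathbf{p}(P^{\bullet}_{1},Q^{\bullet})$ consists of projective $T$-modules (Lemma~\ref{lem: 3.3}(1)), is exact (again because $U\otimes_{A}P^{\bullet}_{1}$ is exact), and has kernel $\mathbf{p}(X_{1},X_{2})$ at degree $0$; since its differentials are $\mathbf{p}$ of morphisms, the adjunction $\mathbf{p}\dashv\mathbf{q}$ yields an isomorphism of complexes $\Hom_{T}(\mathbf{p}(P^{\bullet}_{1},Q^{\bullet}),\mathcal{F})\cong\Hom_{A}(P^{\bullet}_{1},\mathcal{F}_{1})\oplus\Hom_{B}(Q^{\bullet},\mathcal{F}_{2})$ for every flat $T$-module $\mathcal{F}$, and both summands are exact (the first since $\mathcal{F}_{1}$ is flat, the second because $\mathcal{F}_{2}$ has finite flat dimension over $B$ by Lemma~\ref{lem: 3.2}(2), via Lemma~\ref{lem: 3.1}). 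Now the monomorphism $\varphi^{M}$ gives a short exact sequence of $T$-modules $0\to\mathbf{p}(M_{1},0)\to M\to\mathbf{p}(0,N)\to 0$ whose outer terms are Ding projective by what was just shown; since Ding projective modules are closed under extensions, $M$ is Ding projective. Concretely one splices the complete resolutions of the two ends by the horseshoe lemma and inherits $\Hom$-exactness from the resulting degreewise split sequence of $\Hom$-complexes.

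Finally, assuming $(1)$ and $(2)$, the monomorphism $\varphi^{M}$ yields the short exact sequence of left $B$-modules $0\to U\otimes_{A}M_{1}\to M_{2}\to N\to 0$ with $N$ Ding projective. If $U\otimes_{A}M_{1}$ is Ding projective then $M_{2}$ is an extension of Ding projective modules, hence Ding projective; conversely, if $M_{2}$ is Ding projective then, $N$ being Ding projective, the kernel $U\otimes_{A}M_{1}$ is Ding projective because the class of Ding projective $B$-modules is closed under kernels of epimorphisms with Ding projective cokernel. This establishes the final assertion.
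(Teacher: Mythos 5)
Your proof is correct, and for $(2)\Rightarrow(1)$ and the final assertion it is essentially the paper's argument: the same extension $0\to\bigl(\begin{smallmatrix}M_{1}\\ U\otimes_{A}M_{1}\end{smallmatrix}\bigr)\to M\to\bigl(\begin{smallmatrix}0\\ M_{2}/\im(\varphi^{M})\end{smallmatrix}\bigr)\to0$, the same use of Lemmas \ref{lem: 3.1}--\ref{lem: 3.3} (your uniform treatment via $\mathbf{p}(X_{1},X_{2})$ merely merges the paper's two separate computations), and the same closure properties from \cite{YLL}. Your $(1)\Rightarrow(2)$, however, is organized genuinely differently. The paper never invokes the structure of projective $T$-modules: it tests the complete resolution $\Delta$ against the short exact sequence $0\to\bigl(\begin{smallmatrix}0\\ U\otimes_{A}F\end{smallmatrix}\bigr)\to\bigl(\begin{smallmatrix}F\\ U\otimes_{A}F\end{smallmatrix}\bigr)\to\bigl(\begin{smallmatrix}F\\ 0\end{smallmatrix}\bigr)\to0$ in the second variable, applying Lemma \ref{lem: 3.1} \emph{over $T$} to the finite-flat-dimension module $\bigl(\begin{smallmatrix}0\\ U\otimes_{A}F\end{smallmatrix}\bigr)$, so it obtains $M_{1}$ Ding projective \emph{first}; it then proves $\varphi^{M}$ mono and builds the quotient complex $\Xi$ of the $P_{2}^{i}/\im(\varphi^{i})$ by an explicit ladder diagram, testing it against $\bigl(\begin{smallmatrix}0\\ G\end{smallmatrix}\bigr)$. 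You instead decompose the resolution itself, writing each term as $\mathbf{p}(P_{1}^{i},R^{i})$ (which does follow from Lemma \ref{lem: 3.3}(1), since projectivity of $P_{2}/\im(\varphi)$ splits the relevant sequence) and filtering the Hom complex as $0\to\Hom_{B}(R^{\bullet},U\otimes_{A}F_{1})\to\Hom_{T}(P^{\bullet},\mathbf{p}(F_{1},F_{2}))\to\Hom_{A}(P_{1}^{\bullet},F_{1})\oplus\Hom_{B}(R^{\bullet},F_{2})\to0$; this forces the reverse order --- settle $N=M_{2}/\im(\varphi^{M})$ first, then apply Lemma \ref{lem: 3.1} \emph{over $B$} to $R^{\bullet}$ to kill the mixed term and extract $M_{1}$. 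One shared subtlety: both you (for $R^{\bullet}$) and the paper (for $\Delta$) use Lemma \ref{lem: 3.1} as a statement about the \emph{given} totally acyclic complex rather than about the mere existence of one for a Ding projective module; that is what the lemma's proof actually delivers, though slightly more than its statement, so neither argument has a gap. Your packaging makes the paper's diagram chase producing $\Xi$ automatic (it is your $R^{\bullet}$) and isolates the genuine crux --- the mixed term of finite flat dimension --- explicitly; the paper's route buys freedom from any classification of projective $T$-modules and their morphisms, working purely in the target variable.
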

\begin{proof}(1) $\Rightarrow$ (2) There is an exact sequence  of
projective left $T$-modules $$\Delta: \cdots\rightarrow\biggl(\begin{matrix} P^{-1}_{1}\\
P^{-1}_{2}\end{matrix}\biggr)_{\varphi^{-1}}\stackrel{\Small\biggl(\begin{matrix} \partial^{-1}_{1}\\
\partial^{-1}_{2}\end{matrix}\biggr)}\rightarrow\biggl(\begin{matrix} P^{0}_{1}\\
P^{0}_{2}\end{matrix}\biggr)_{\varphi^{0}}\stackrel{\Small\biggl(\begin{matrix} \partial^{0}_{1}\\
\partial^{0}_{2}\end{matrix}\biggr)}\rightarrow\biggl(\begin{matrix} P^{1}_{1}\\
P^{1}_{2}\end{matrix}\biggr)_{\varphi^{1}}\stackrel{\Small\biggl(\begin{matrix} \partial^{1}_{1}\\
\partial^{1}_{2}\end{matrix}\biggr)}\rightarrow\biggl(\begin{matrix} P^{2}_{1}\\
P^{2}_{2}\end{matrix}\biggr)_{\varphi^{2}}\rightarrow\cdots$$
with $M=\ker\biggl(\begin{matrix} \partial^{0}_{1}\\
\partial^{0}_{2}\end{matrix}\biggr)$, which remains
exact after applying $\Hom_{T}(-, H)$  for each flat left $T$-module
$H$. By Lemma \ref{lem: 3.3}, we get the   exact sequence
$$\Lambda_{1}: \cdots\rightarrow
P_{1}^{-1}\stackrel{\Small\partial_{1}^{-1}}\rightarrow
P_{1}^{0}\stackrel{\Small\partial_{1}^{0}}\rightarrow
P_{1}^{1}\stackrel{\Small\partial_{1}^{1}}\rightarrow
P_{1}^{2}\rightarrow \cdots$$  of projective left $A$-modules with
$M_{1}=\ker(\partial_{1}^{0})$.

Let $F$ be a flat left $A$-module. There exists the exact sequence in $T$-Mod $$0\rightarrow\biggl(\begin{matrix} 0\\
U\otimes_{A} F\end{matrix}\biggr)\rightarrow\biggl(\begin{matrix} F\\
U\otimes_{A} F\end{matrix}\biggr)\rightarrow\biggl(\begin{matrix}F\\
0\end{matrix}\biggr)\rightarrow 0,$$ which induces the exact sequence of complexes $$0\rightarrow\Hom_{T}(\Delta,\biggl(\begin{matrix} 0\\
U\otimes_{A} F\end{matrix}\biggr))\rightarrow\Hom_{T}(\Delta,\biggl(\begin{matrix} F\\
U\otimes_{A} F\end{matrix}\biggr))\rightarrow\Hom_{T}(\Delta,\biggl(\begin{matrix}F\\
0\end{matrix}\biggr))\rightarrow 0.$$ By Lemma \ref{lem: 3.3}, $\biggl(\begin{matrix} F\\
U\otimes_{A} F\end{matrix}\biggr)$ is flat, hence $\Hom_{T}(\Delta,\biggl(\begin{matrix} F\\
U\otimes_{A} F\end{matrix}\biggr))$  is exact. Since
$fd(_{B}U)<\infty$, $fd(U\otimes_{A}F)<\infty$ by Lemma \ref{lem: 3.2} and so $fd\biggl(\begin{matrix} 0\\
U\otimes_{A} F\end{matrix}\biggr)<\infty$. By Lemma \ref{lem: 3.1}, $\Hom_{T}(\Delta,\biggl(\begin{matrix} 0\\
U\otimes_{A} F\end{matrix}\biggr))$ is exact and so $\Hom_{T}(\Delta, \biggl(\begin{matrix}F\\
0\end{matrix}\biggr))$ is exact by \cite[Theorem 6.3]{R}. By
adjointness of functors $\textbf{p}$ and $\textbf{q}$, we
have $\Hom_{A}(\Lambda_{1}, F)\cong\Hom_{T}(\Delta, \biggl(\begin{matrix}F\\
0\end{matrix}\biggr))$ is exact. Thus $M_{1}$ is a Ding projective
left $A$-module.

Let $\lambda_{1}: M_{1}\rightarrow P^{0}_{1}$ and $\lambda_{2}: M_{2}\rightarrow P^{0}_{2}$  be the inclusions.
Consider the following commutative diagram in $B$-Mod: $$\xymatrix{U\otimes_{A} M_{1}\ar[d]_{\varphi^{M}}\ar[r]^{1\otimes \lambda_{1}}&U\otimes_{A} P^{0}_{1}\ar[d]^{\varphi^{0}}\\
M_{2}\ar[r]^{\lambda_{2}}&P^{0}_{2}.}$$ Since $fd(U_{A})<\infty$,
$U\otimes_{A}\Lambda_{1}$ is exact by \cite[Lemma 2.3]{EIT}. Thus
$1\otimes \lambda_{1}$ is a monomorphism. Also $\varphi^{0}$ is a
monomorphism by Lemma \ref{lem: 3.3}, so $\varphi^{M}$ is a
monomorphism.

For any $i\in\mathbb{Z}$, there exists $\overline{\partial^{i}_{2}}:
P_{2}^{i}/\im(\varphi^{i})\rightarrow
P_{2}^{i+1}/\im(\varphi^{i+1})$ such that  the following diagram
with exact rows is commutative.
$$\xymatrix{&\vdots\ar[d]&&\vdots\ar[d] &\vdots\ar@{.>}[d] &\\
0 \ar[r]
&U\otimes_{A}P_{1}^{-1}\ar[d]^{^{1\otimes\partial^{-1}_{1}}}\ar[rr]^{\varphi^{-1}}&&P_{2}^{-1}\ar[d]^{\partial^{-1}_{2}}
\ar[r]&P_{2}^{-1}/\im(\varphi^{-1})\ar[r]\ar@{.>}[d]^{\overline{\partial^{-1}_{2}}}&0\\0
\ar[r] &U\otimes_{A}
P_{1}^{0}\ar[d]^{^{1\otimes\partial^{0}_{1}}}\ar[rr]^{\varphi^{0}}&&P_{2}^{0}\ar[d]^{\partial^{0}_{2}}
\ar[r]&P_{2}^{0}/\im(\varphi^{0})\ar[r]\ar@{.>}[d]^{\overline{\partial^{0}_{2}}}&0\\0
\ar[r] &U\otimes_{A}
P_{1}^{1}\ar[d]^{^{1\otimes\partial^{1}_{1}}}\ar[rr]^{\varphi^{1}}&&P_{2}^{1}\ar[d]^{\partial^{1}_{2}}
\ar[r]&P_{2}^{1}/\im(\varphi^{1})\ar[r]\ar@{.>}[d]^{\overline{\partial^{1}_{2}}}&0\\
0\ar[r]&U\otimes_{A} P_{1}^{2}\ar[d]\ar[rr]^{\varphi^{2}}&&P_{2}^{2}
\ar[d]\ar[r]&P_{2}^{2}/\im(\varphi^{2})\ar@{.>}[d]\ar[r]&0\\
&\vdots &&\vdots &\vdots &}$$ Since the first column and the second
column are exact, we get the exact sequence
$$\Xi: \cdots\rightarrow
P_{2}^{-1}/\im(\varphi^{-1})\stackrel{\Small
\overline{\partial^{-1}_{2}}}\rightarrow
P_{2}^{0}/\im(\varphi^{0})\stackrel{\Small
\overline{\partial^{0}_{2}}}\rightarrow
P_{2}^{1}/\im(\varphi^{1})\stackrel{\Small
\overline{\partial^{1}_{2}}}\rightarrow
P_{2}^{2}/\im(\varphi^{2})\rightarrow \cdots$$ of projective left
$B$-modules  by \cite[Theorem 6.3]{R} with
$M_{2}/\im(\varphi^{M})\cong\ker(\overline{{\partial^{0}_{2}}})$.

Let $G$ be a flat left $B$-module. Since $\biggl(\begin{matrix} 0\\
G\end{matrix}\biggr)$ is a flat left $T$-module,
$\Hom_{T}(\Delta, \biggl(\begin{matrix} 0\\
G\end{matrix}\biggr))$ is exact. Thus by adjointness of
functors $\textbf{p}$ and $\textbf{q}$, $\Hom_{B}(\Xi, G)\cong\Hom_{T}(\Delta, \biggl(\begin{matrix} 0\\
G\end{matrix}\biggr))$ is exact. So $M_{2}/\im(\varphi^{M})$ is a
Ding projective left $B$-module.

(2) $\Rightarrow$ (1) Since $\varphi^{M}: U\otimes_{A} M_{1}\rightarrow M_{2}$ is a monomorphism, there exists an exact sequence
in $T$-Mod $$0\rightarrow\biggl(\begin{matrix} M_{1}\\
U\otimes_{A} M_{1}\end{matrix}\biggr)\rightarrow\biggl(\begin{matrix} M_{1}\\
M_{2}\end{matrix}\biggr)_{\varphi^{M}}\rightarrow\biggl(\begin{matrix} 0\\
M_{2}/\im(\varphi^{M})\end{matrix}\biggr)\rightarrow 0.$$

We first prove that $\biggl(\begin{matrix} M_{1}\\
U\otimes_{A} M_{1}\end{matrix}\biggr)$ is a Ding projective left
$T$-module. Since $M_{1}$ is a Ding projective left $A$-module,
there is an exact sequence
$$\Lambda: \cdots\rightarrow
P^{-1}\stackrel{\Small\partial^{-1}}\rightarrow
P^{0}\stackrel{\Small\partial^{0}}\rightarrow
P^{1}\stackrel{\Small\partial^{1}}\rightarrow P^{2}\rightarrow
\cdots$$ of projective left $A$-modules with
$M_{1}=\ker(\partial^{0})$, which remains exact after applying
$\Hom_{A}(-, F)$ for each flat left $A$-module $F$. Since
$fd(U_{A})<\infty$,
$U\otimes_{A}\Lambda$ is exact by \cite[Lemma 2.3]{EIT}. So we get the exact sequence of projective left $T$-modules
$$\Upsilon: \cdots\rightarrow\biggl(\begin{matrix} P^{-1}_{1}\\
U\otimes_{A} P^{-1}\end{matrix}\biggr)\stackrel{\Small\biggl(\begin{matrix} \partial^{-1}\\
1\otimes\partial^{-1}\end{matrix}\biggr)}\rightarrow\biggl(\begin{matrix} P^{0}\\
U\otimes_{A} P^{0}\end{matrix}\biggr)\stackrel{\Small\biggl(\begin{matrix} \partial^{0}\\
1\otimes\partial^{0}\end{matrix}\biggr)}\rightarrow\biggl(\begin{matrix} P^{1}\\
U\otimes_{A} P^{1}\end{matrix}\biggr)\rightarrow\cdots$$ with $\biggl(\begin{matrix} M_{1}\\
U\otimes_{A} M_{1}\end{matrix}\biggr)\cong\ker\biggl(\begin{matrix} \partial^{0}\\
1\otimes\partial^{0}\end{matrix}\biggr)$. For any flat left $T$-module $\biggl(\begin{matrix}H_{1}\\
H_{2}\end{matrix}\biggr)_{\varphi^{H}}$, $H_{1}$ is a flat left
$A$-module by Lemma \ref{lem: 3.3}. Then $\Hom_{T}(\Upsilon, \biggl(\begin{matrix}H_{1}\\
H_{2}\end{matrix}\biggr))\cong\Hom_{A}(\Lambda,H_{1})$ is exact by
adjointness of functors $\textbf{p}$ and $\textbf{q}$. So $\biggl(\begin{matrix} M_{1}\\
U\otimes_{A} M_{1}\end{matrix}\biggr)$ is a Ding projective left
$T$-module.

Next we prove that $\biggl(\begin{matrix} 0\\
M_{2}/\im(\varphi^{M})\end{matrix}\biggr)$ is  a Ding projective
left $T$-module. Since $M_{2}/\im(\varphi^{M})$ is a Ding projective
left $B$-module, there is an exact sequence $$\Theta:
\cdots\rightarrow Q^{-1}\stackrel{f^{-1}}\rightarrow
Q^{0}\stackrel{f^{0}}\rightarrow Q^{1}\stackrel{f^{1}}\rightarrow
Q^{2}\stackrel{f^{2}}\rightarrow \cdots$$ of projective left
$B$-modules with $M_{2}/\im(\varphi^{M})=\ker(f^{0})$, which remains
exact after applying $\Hom_{B}(-, G)$ for each flat left $B$-module
$G$. So we get the exact sequence of projective left $T$-modules
$$\biggl(\begin{matrix} 0\\
\Theta\end{matrix}\biggr): \cdots\rightarrow\biggl(\begin{matrix} 0\\
Q^{-1}\end{matrix}\biggr)\stackrel{\Small\biggl(\begin{matrix} 0\\
f^{-1}\end{matrix}\biggr)}\rightarrow\biggl(\begin{matrix}0\\
Q^{0}\end{matrix}\biggr)\stackrel{\Small\biggl(\begin{matrix} 0\\
f^{0}\end{matrix}\biggr)}\rightarrow\biggl(\begin{matrix} 0\\
Q^{1}\end{matrix}\biggr)\stackrel{\Small\biggl(\begin{matrix} 0\\
f^{1}\end{matrix}\biggr)}\rightarrow\biggl(\begin{matrix} 0\\
Q^{2}\end{matrix}\biggr)\rightarrow\cdots$$ with $\biggl(\begin{matrix} 0\\
M_{2}/\im(\varphi^{M})\end{matrix}\biggr)=\ker\biggl(\begin{matrix} 0\\
f^{0}\end{matrix}\biggr).$  Let $H=\biggl(\begin{matrix}H_{1}\\
H_{2}\end{matrix}\biggr)_{\varphi^{H}}$ be a flat left $T$-module.
By Lemma \ref{lem: 3.3}, there is the exact sequence of left
$B$-modules $$0\rightarrow U\otimes_{A} H_{1}\stackrel{\varphi^{H}}
\rightarrow H_{2}\rightarrow H_{2}/\im(\varphi^{H})\rightarrow 0$$
with $H_{1}$ and
$H_{2}/\im(\varphi^{H})$ flat. By Lemma \ref{lem: 3.2}$, fd(U\otimes_{A} H_{1})<\infty$ and so $fd(H_{2})<\infty$.  Hence $\Hom_{T}(\biggl(\begin{matrix} 0\\
\Theta\end{matrix}\biggr),\biggl(\begin{matrix}  H_{1}\\
H_{2}\end{matrix}\biggr))\cong\Hom_{B}(\Theta,
H_{2})$ is exact by Lemma \ref{lem: 3.1}, and so $\biggl(\begin{matrix} 0\\
M_{2}/\im(\varphi^{M})\end{matrix}\biggr)$  is a Ding projective
left $T$-module.

By \cite[Lemma 2.4]{YLL}, $M=\biggl(\begin{matrix} M_{1}\\
M_{2}\end{matrix}\biggr)_{\varphi^{M}}$ is a Ding projective left
$T$-module.

Finally, if the equivalent conditions above hold, then there exists
an exact sequence $$0\rightarrow U\otimes_{A}
M_{1}\stackrel{\varphi^{M}}\rightarrow M_{2}\rightarrow
M_{2}/\im(\varphi^{M})\rightarrow 0.$$ Since
$M_{2}/\im(\varphi^{M})$ is a Ding projective left $B$-module,
$U\otimes_{A} M_{1}$ is  Ding projective if and only if $M_{2}$ is
Ding projective by \cite[Theorem 2.6]{YLL}.
\end{proof}
As an immediate consequence of Theorem \ref{thm: 3.4}, we have
\begin{cor} \label{cor: 3.5}Let $R$ be a ring, $T(R)=\biggl(\begin{matrix} R&0\\
R&R \end{matrix}\biggr)$  and $M=\biggl(\begin{matrix} M_{1}\\
M_{2}\end{matrix}\biggr)_{\varphi^{M}}$ be a  left $T(R)$-module.
The following  conditions are equivalent:\begin{enumerate}\item $M$
is a Ding projective left $T(R)$-module.\item $M_{1}$ and
$M_{2}/\im(\varphi^{M})$ are Ding projective  left $R$-modules, and
$\varphi^{M}$ is a monomorphism.\item $M_{2}$ and
$M_{2}/\im(\varphi^{M})$ are Ding projective  left $R$-modules, and
$\varphi^{M}$ is a monomorphism.
\end{enumerate}
\end{cor}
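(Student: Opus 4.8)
The plan is to deduce the whole corollary from Theorem \ref{thm: 3.4}. For $T(R)=\biggl(\begin{matrix} R&0\\ R&R\end{matrix}\biggr)$ we have $A=B=R$ and $U=R$ regarded as an $(R,R)$-bimodule, so $U_{A}=R_{R}$ and ${}_{B}U={}_{R}R$ are free and hence have flat dimension zero. In particular the hypotheses of Theorem \ref{thm: 3.4} are satisfied automatically, and substituting $A=B=R$ into its statement yields the equivalence $(1)\Leftrightarrow(2)$ word for word.

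It remains to prove $(2)\Leftrightarrow(3)$. The key observation is that, since $U=R$, the canonical isomorphism $U\otimes_{A}M_{1}=R\otimes_{R}M_{1}\cong M_{1}$ identifies $U\otimes_{A}M_{1}$ with $M_{1}$. Conditions $(2)$ and $(3)$ both require $\varphi^{M}$ to be a monomorphism and $M_{2}/\im(\varphi^{M})$ to be a Ding projective left $R$-module, and they differ only in whether $M_{1}$ or $M_{2}$ is asked to be Ding projective. Hence it is enough to show that, under these two shared hypotheses, $M_{1}$ is Ding projective if and only if $M_{2}$ is.

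To this end I would use the short exact sequence of left $R$-modules
$$0\rightarrow U\otimes_{A}M_{1}\stackrel{\varphi^{M}}{\rightarrow} M_{2}\rightarrow M_{2}/\im(\varphi^{M})\rightarrow 0,$$
which is exact precisely because $\varphi^{M}$ is a monomorphism; via the identification $U\otimes_{A}M_{1}\cong M_{1}$ its left term is $M_{1}$. Since $M_{2}/\im(\varphi^{M})$ is Ding projective, \cite[Theorem 2.6]{YLL} applies and gives that $M_{1}$ is Ding projective if and only if $M_{2}$ is Ding projective, which is exactly the swap needed to interchange $(2)$ and $(3)$. Equivalently, one may read this off the final sentence of Theorem \ref{thm: 3.4}, whose assertion ``$U\otimes_{A}M_{1}$ is Ding projective if and only if $M_{2}$ is'' becomes ``$M_{1}$ is Ding projective if and only if $M_{2}$ is'' under the same identification. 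I do not expect a genuine obstacle, since the corollary is a direct specialization of Theorem \ref{thm: 3.4}; the only point needing care is this bookkeeping of the isomorphism $U\otimes_{R}M_{1}\cong M_{1}$.
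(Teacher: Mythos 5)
Your proposal is correct and matches the paper's (implicit) argument: the paper states Corollary \ref{cor: 3.5} as an immediate consequence of Theorem \ref{thm: 3.4}, specializing $A=B=U=R$ so that $U\otimes_{R}M_{1}\cong M_{1}$, with the equivalence $(2)\Leftrightarrow(3)$ coming from the short exact sequence $0\rightarrow M_{1}\rightarrow M_{2}\rightarrow M_{2}/\im(\varphi^{M})\rightarrow 0$ and \cite[Theorem 2.6]{YLL}, exactly as you argue. One small point of care: your alternative phrasing via the ``in this case'' sentence of Theorem \ref{thm: 3.4} would be circular for $(3)\Rightarrow(2)$ (that sentence presupposes the equivalent conditions hold), but your primary argument invoking \cite[Theorem 2.6]{YLL} directly is self-contained and avoids this.
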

Next we investigate Ding projective dimensions of modules over
formal triangular matrix rings.

Given a left $R$-module $X$, let $Dpd(X)$ denote $\inf\{n$: there
exists an exact sequence $0\rightarrow G_{n}\rightarrow\cdots
\rightarrow G_{1}\rightarrow G_{0}\rightarrow X\rightarrow 0$ of
left $R$-modules with each $G_{i}$ Ding projective\} and call
$Dpd(X)$ the  \emph{Ding projective dimension} of $X$ \cite{DLM}. If
no such $n$ exists, set $Dpd(X)$ = $\infty$. Put $lDPD(R)$ =
sup\{$Dpd(X): X$ is any left $R$-module\} and call $lDPD(R)$  the
\emph{left global Ding projective dimension} of $R$.
\begin{lem} \label{lem: 3.6}The following  conditions  are equivalent for a left $R$-module $M$:
\begin{enumerate}\item Dpd$(M)\leq n$. \item For any  exact sequence
$0\rightarrow K_{n}\rightarrow P_{n-1}\rightarrow\cdots\rightarrow
P_{1}\rightarrow P_{0}\rightarrow M\rightarrow 0$ with  each $P_{i}$
Ding projective, $K_{n}$ is Ding projective.
\end{enumerate}
\end{lem}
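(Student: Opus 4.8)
The statement is a standard "dimension-shifting" characterization of Ding projective dimension, completely analogous to the classical results for projective, Gorenstein projective, and Gorenstein injective dimensions. I would prove it as an equivalence in the usual two directions, with the nontrivial content concentrated in $(1) \Rightarrow (2)$.

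The direction $(2) \Rightarrow (1)$ is immediate: given $Dpd(M)\le n$ as hypothesis of $(2)$, simply take any projective (hence Ding projective) resolution truncated at stage $n$, producing an exact sequence $0\rightarrow K_{n}\rightarrow P_{n-1}\rightarrow\cdots\rightarrow P_{0}\rightarrow M\rightarrow 0$ with each $P_{i}$ projective; if every such $K_n$ arising from an arbitrary Ding projective resolution is forced to be Ding projective, then in particular one exhibits a length-$n$ Ding projective resolution, giving $Dpd(M)\le n$. Actually the clean logic is: $(2)$ asserts a universal property, and choosing any single Ding projective partial resolution that realizes the infimum shows $Dpd(M)\le n$ follows trivially, so this direction is essentially definitional.

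For $(1)\Rightarrow(2)$ I would argue by induction on $n$. The base case $n=0$ says $M$ itself is Ding projective, and the claim reduces to showing the kernel of a Ding projective onto a Ding projective is again Ding projective; this is exactly the kind of closure property I expect to invoke from the cited results on Ding projective modules (the class of Ding projectives is closed under kernels of epimorphisms between its members, analogous to \cite[Theorem 2.6]{YLL} used earlier in the paper). For the inductive step, given two exact sequences $0\rightarrow K_n\rightarrow P_{n-1}\rightarrow\cdots\rightarrow P_0\rightarrow M\rightarrow 0$ and $0\rightarrow K'_n\rightarrow G_{n-1}\rightarrow\cdots\rightarrow G_0\rightarrow M\rightarrow 0$ with the $P_i$ Ding projective and the $G_i$ witnessing $Dpd(M)\le n$ (so $K'_n$ Ding projective), I would apply the generalized Schanuel lemma to relate $K_n$ and $K'_n$ up to Ding projective summands, then use the fact that the Ding projective class is closed under direct summands (from \cite{YLL}) to conclude $K_n$ is Ding projective. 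Alternatively, and perhaps more cleanly, one splits off the first syzygy: write $0\rightarrow K_1\rightarrow P_0\rightarrow M\rightarrow 0$, observe $Dpd(K_1)\le n-1$ by a short exact sequence argument, and apply the induction hypothesis to the truncated resolution of $K_1$.

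The main obstacle I anticipate is verifying that the Ding projective class has the requisite closure properties with the precise generality needed here — specifically closure under kernels of surjections between Ding projectives and under direct summands. These are the analogues for Ding projective modules of the well-known Gorenstein projective closure properties, and the paper has already used \cite[Theorem 2.6]{YLL} and \cite[Lemma 2.4]{YLL} in the proof of Theorem~\ref{thm: 3.4}, so I would cite those same sources. Once the closure properties are in hand, the induction and Schanuel argument are routine; the real care is in checking that the horseshoe-type comparison of two resolutions respects the Ding projective structure, which is where the cited stability results do the essential work.
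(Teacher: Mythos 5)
Your overall architecture coincides with the paper's: direction $(2)\Rightarrow(1)$ is immediate (truncate a projective resolution of $M$ at stage $n$ and apply the hypothesis), and direction $(1)\Rightarrow(2)$ is reduced to closure properties of the Ding projective class plus a comparison of two Ding projective resolutions of $M$. The paper compresses exactly this into citations: closure under extensions, kernels of epimorphisms, direct sums and direct summands comes from \cite[Theorem 2.6 and Corollary 2.7]{YLL} and \cite[Remark 2.2(2)]{DLM}, and the resolution comparison is \cite[Lemma 3.12]{AB}. So you are on the paper's route; the question is whether your hand-made version of the comparison step is sound.

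It is not, as stated. The generalized Schanuel lemma cannot be invoked for resolutions by Ding projectives: its proof splits the pullback sequence $0\rightarrow K\rightarrow Y\rightarrow P\rightarrow 0$, and that splitting uses projectivity of $P$. Ding projective modules need not be projective, so Schanuel's argument does not deliver the claim that $K_n$ and $K'_n$ agree ``up to Ding projective summands,'' and your subsequent appeal to closure under direct summands has nothing to act on. What the closure properties actually give is the weaker (and sufficient) statement that $K_n$ is Ding projective if and only if $K'_n$ is, proved by the pullback argument: given $0\rightarrow K\rightarrow X_0\rightarrow M\rightarrow 0$ and $0\rightarrow K'\rightarrow X'_0\rightarrow M\rightarrow 0$ with $X_0, X'_0$ Ding projective and $K'$ Ding projective, the pullback $Y$ of $X_0\rightarrow M\leftarrow X'_0$ sits in exact sequences $0\rightarrow K'\rightarrow Y\rightarrow X_0\rightarrow 0$ and $0\rightarrow K\rightarrow Y\rightarrow X'_0\rightarrow 0$; the first shows $Y$ is Ding projective by closure under \emph{extensions} --- a property absent from your list of required closure properties, and indispensable here --- and the second then gives $K$ Ding projective by closure under kernels of epimorphisms; one then inducts on $n$. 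Note that your ``cleaner'' alternative conceals the same content: the assertion $Dpd(K_1)\leq n-1$ is not a formal consequence of the short exact sequence $0\rightarrow K_1\rightarrow P_0\rightarrow M\rightarrow 0$, since the resolution witnessing $Dpd(M)\leq n$ has nothing to do with $P_0$; making that assertion rigorous is again the one-step pullback comparison. So either carry out the pullback induction in full, or do what the paper does and quote \cite[Lemma 3.12]{AB}, which is stated precisely for classes with these closure properties.
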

\begin{proof}(1) $\Rightarrow$ (2) There exists an exact
sequence $0\rightarrow G_{n}\rightarrow\cdots \rightarrow
G_{1}\rightarrow G_{0}\rightarrow M\rightarrow 0$ of left
$R$-modules with each $G_{i}$   Ding projective. Since the class of
Ding projective left $R$-modules is closed under extensions, kernels
of epimorphisms, direct sums, direct summands by \cite[Theorem 2.6
and Corollary 2.7]{YLL} and \cite[Remark 2.2(2)]{DLM}, $K_{n}$ is
Ding projective by \cite[Lemma 3.12]{AB}.

(2) $\Rightarrow$ (1) is clear.
\end{proof}
\begin{lem} \label{lem: 3.7}Let  $lDPD(B)<\infty$, $U_{A}$ have finite flat dimension and $_{B}U$ be projective.
If $X$ is a Ding projective left $A$-module, then $U\otimes_{A} X$
is a Ding projective left $B$-module.
\end{lem}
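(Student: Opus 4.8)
The plan is to manufacture, from a complete resolution of $X$, an exact complex of projective left $B$-modules having $U\otimes_{A}X$ as one of its cycles, and then to turn the hypothesis $lDPD(B)<\infty$ into the statement that this complex stays exact after applying $\Hom_{B}(-,G)$ for every flat $G$. First I would start from the defining sequence of $X$: since $X$ is Ding projective over $A$, there is an exact complex $\Lambda:\cdots\to P^{-1}\to P^{0}\to P^{1}\to\cdots$ of projective left $A$-modules with $X=\ker(P^{0}\to P^{1})$. Applying $U\otimes_{A}-$, the hypothesis $_{B}U$ projective makes each $U\otimes_{A}P^{i}$ a projective left $B$-module (a summand of a direct sum of copies of $_{B}U$), while $fd(U_{A})<\infty$ makes $U\otimes_{A}\Lambda$ exact by \cite[Lemma 2.3]{EIT}. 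Dimension shifting along the short exact sequences $0\to Z^{i}\to P^{i}\to Z^{i+1}\to 0$ (with $Z^{0}=X$) shows $\Tor^{A}_{j}(U,Z^{i})=0$ for all $j\geq 1$, so the natural map $U\otimes_{A}X\to U\otimes_{A}P^{0}$ is monic with image $\ker(U\otimes_{A}P^{0}\to U\otimes_{A}P^{1})$. Writing $\mathbf P$ for the exact complex $U\otimes_{A}\Lambda$ of projective left $B$-modules and $C^{i}=\ker(U\otimes_{A}P^{i}\to U\otimes_{A}P^{i+1})$ for its cycles, we have $C^{0}=U\otimes_{A}X$.

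By the definition of Ding projective it then remains only to check that $\Hom_{B}(\mathbf P,G)$ is exact for every flat left $B$-module $G$. Breaking $\mathbf P$ into the short exact sequences $0\to C^{i}\to U\otimes_{A}P^{i}\to C^{i+1}\to 0$ and using that $U\otimes_{A}P^{i}$ is projective, this is equivalent to $\Ext^{1}_{B}(C^{i},G)=0$ for every $i$ and every flat $G$. Here the hypothesis $lDPD(B)=m<\infty$ enters decisively: every left $B$-module, in particular every cycle $C^{i}$, satisfies $Dpd(C^{i})\leq m$. Since a Ding projective module is $\Ext^{\geq 1}$-orthogonal to every flat module (the left half of its complete resolution is a projective resolution that remains exact under $\Hom_{B}(-,G)$), a module of Ding projective dimension $\leq m$ has $\Ext^{>m}_{B}(-,G)=0$ for flat $G$.

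The remaining ingredient is the dimension-shift isomorphism coming from the same short exact sequences together with the projectivity of $U\otimes_{A}P^{i}$, namely $\Ext^{1}_{B}(C^{i},G)\cong\Ext^{1+k}_{B}(C^{i+k},G)$ for all $k\geq 0$. Taking $k\geq m$ forces $1+k>m\geq Dpd(C^{i+k})$, so the right-hand side vanishes and hence $\Ext^{1}_{B}(C^{i},G)=0$. Therefore $\Hom_{B}(\mathbf P,G)$ is exact for every flat $G$, and $U\otimes_{A}X=C^{0}$ is a Ding projective left $B$-module, as desired.

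The hard part will be the second step: converting the global hypothesis $lDPD(B)<\infty$ into the pointwise $\Hom$-exactness of $\mathbf P$. The clean device is precisely the isomorphism $\Ext^{1}_{B}(C^{i},G)\cong\Ext^{1+k}_{B}(C^{i+k},G)$, which trades a first cohomology group for an arbitrarily high one that is annihilated by the finite Ding projective dimension of the cycle; what makes this legitimate is that \emph{all} the $C^{i}$, and not merely $C^{0}$, are genuine cycles of an exact complex of projectives, which is exactly what the construction of $\mathbf P$ in the first step secures (and note that this argument never needs the cycles to have the special form $U\otimes_{A}Z^{i}$). A secondary, routine point is the $\Tor$-vanishing used to identify $U\otimes_{A}X$ with a cycle of $\mathbf P$, which follows from $fd(U_{A})<\infty$ by the dimension shift already indicated.
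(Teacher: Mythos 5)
Your proof is correct. Its first half coincides with the paper's: both apply $U\otimes_{A}-$ to a complete projective resolution $\Lambda$ of $X$, using projectivity of $_{B}U$ to make each $U\otimes_{A}P^{i}$ projective, and $fd(U_{A})<\infty$ together with \cite[Lemma 2.3]{EIT} to make $U\otimes_{A}\Lambda$ exact with $U\otimes_{A}X$ as a cycle. Where you diverge is in how $lDPD(B)<\infty$ yields $\Hom_{B}(-,G)$-exactness for flat $G$. The paper places the finiteness on $G$: it quotes \cite[Proposition 3.2]{DLM} to get $id(G)<\infty$ for every flat left $B$-module $G$, and then \cite[Lemma 2.4]{EIT}, which says an exact complex of projectives stays exact under $\Hom_{B}(-,G)$ whenever $id(G)<\infty$. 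You place the finiteness on the cycles: every cycle $C^{i}$ of $U\otimes_{A}\Lambda$ satisfies $Dpd(C^{i})\leq m=lDPD(B)$, Ding projectives are $\Ext^{\geq 1}$-orthogonal to flats, and the dimension shift $\Ext^{1}_{B}(C^{i},G)\cong\Ext^{1+k}_{B}(C^{i+k},G)$ with $k\geq m$ kills $\Ext^{1}_{B}(C^{i},G)$. The underlying mechanism — trading a first Ext group for a high-degree one that vanishes — is the same (it is what makes \cite[Lemma 2.4]{EIT} work), but your version is more self-contained: it needs neither the nontrivial fact that flat modules have finite injective dimension over a ring with $lDPD(B)<\infty$ nor \cite[Lemma 2.4]{EIT}, only the definition of $lDPD$ and the standard orthogonality of Ding projectives against flats; the paper's version is shorter granted the two citations. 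One step you should spell out: the implication that $Dpd(N)\leq m$ forces $\Ext^{j}_{B}(N,G)=0$ for all $j>m$ and flat $G$ requires breaking a length-$m$ Ding projective resolution of $N$ into short exact sequences and shifting, using $\Ext^{\geq 1}_{B}(D,G)=0$ for each Ding projective term $D$; this is routine, but it is the hinge on which your argument turns.
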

\begin{proof}There is an exact sequence of projective left $A$-modules $$\Lambda: \cdots\rightarrow
P^{-1}\rightarrow P^{0}\rightarrow P^{1}\rightarrow P^{2}\rightarrow
\cdots$$ with $X=\ker(P^{0}\rightarrow P^{1})$. Since $_{B}U$ is
projective, each $U\otimes_{A} P^{i}$ is projective. Since
$fd(U_{A})<\infty$, we get the exact sequence of projective left
$B$-modules $$U\otimes_{A} \Lambda: \cdots\rightarrow U\otimes_{A}
P^{-1}\rightarrow U\otimes_{A} P^{0}\rightarrow U\otimes_{A}
P^{1}\rightarrow U\otimes_{A} P^{2}\rightarrow \cdots$$ with
$U\otimes_{A} X\cong\ker(U\otimes_{A} P^{0}\rightarrow U\otimes_{A}
P^{1})$ by \cite[Lemma 2.3]{EIT}. For each flat  left $B$-module
$G$, $id(G)<\infty$ by \cite[Proposition 3.2]{DLM}. Then
$\Hom_{B}(U\otimes_{A} \Lambda, G)$ is exact  by \cite[Lemma
2.4]{EIT}. So $U\otimes_{A} X$ is a Ding projective left $B$-module.
\end{proof}
\begin{thm} \label{thm: 3.8}Let  $lDPD(B)<\infty$,  $U_{A}$ have finite flat dimension and $_{B}U$ be projective. If $M=\biggl(\begin{matrix} M_{1}\\
M_{2}\end{matrix}\biggr)_{\varphi^{M}}$ is a  left $T$-module, then
$$max\{Dpd(M_{1}),Dpd(M_{2})\}\leq Dpd(M)\leq
max\{Dpd(M_{1})+1,Dpd(M_{2})\}.$$
\end{thm}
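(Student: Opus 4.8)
The plan is to prove the two inequalities separately, relying on Theorem~\ref{thm: 3.4} to relate Ding projectivity of $M$ to that of its components, together with Lemma~\ref{lem: 3.6} and Lemma~\ref{lem: 3.7}. Throughout, set $n=\max\{Dpd(M_1)+1,Dpd(M_2)\}$; the assumption $lDPD(B)<\infty$ guarantees that $Dpd(M_2)<\infty$, and since $U_A$ has finite flat dimension with $_BU$ projective, Lemma~\ref{lem: 3.7} lets us transport Ding projective resolutions from $A$-Mod into $B$-Mod along $U\otimes_A-$. Note also that the hypotheses of Theorem~\ref{thm: 3.4} are met, since $_BU$ projective forces $fd(_BU)=0<\infty$ and $fd(U_A)<\infty$ is assumed.

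For the lower bound $\max\{Dpd(M_1),Dpd(M_2)\}\le Dpd(M)$, I would use the left adjoint functor $\textbf{q}$, which is exact and sends a projective Ding resolution (more precisely, a Ding projective resolution) of $M$ in $T$-Mod to the pair of its first and second coordinates. The key point is that $\textbf{q}$ carries Ding projective $T$-modules to Ding projective $A$- and $B$-modules in each coordinate: by Theorem~\ref{thm: 3.4}, if $\bigl(\begin{smallmatrix}G_1\\G_2\end{smallmatrix}\bigr)$ is Ding projective then $G_1$ is Ding projective over $A$ and $G_2/\im(\varphi)$ is Ding projective over $B$, and by Lemma~\ref{lem: 3.7} together with the short exact sequence $0\to U\otimes_A G_1\to G_2\to G_2/\im(\varphi)\to 0$ and \cite[Theorem 2.6]{YLL}, $G_2$ is itself Ding projective over $B$. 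Hence applying $\textbf{q}$ to a length-$Dpd(M)$ Ding projective resolution of $M$ yields Ding projective resolutions of $M_1$ and $M_2$ of the same length, giving $Dpd(M_1)\le Dpd(M)$ and $Dpd(M_2)\le Dpd(M)$.

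For the upper bound $Dpd(M)\le n$, the strategy is to build a Ding projective resolution of $M$ of length $n$ and invoke Lemma~\ref{lem: 3.6}. Choose a Ding projective resolution $0\to K\to G_{n-1}\to\cdots\to G_0\to M_1\to 0$ over $A$ of length at most $n-1\ge Dpd(M_1)$, so that $K$ is Ding projective by Lemma~\ref{lem: 3.6}, and a Ding projective resolution of $M_2/\im(\varphi^M)$ over $B$ of length at most $n$. Applying the functor $\textbf{p}$ (or assembling directly via the short exact sequence $0\to\bigl(\begin{smallmatrix}M_1\\U\otimes_A M_1\end{smallmatrix}\bigr)\to M\to\bigl(\begin{smallmatrix}0\\M_2/\im(\varphi^M)\end{smallmatrix}\bigr)\to 0$ from the proof of Theorem~\ref{thm: 3.4}), and using Lemma~\ref{lem: 3.7} to ensure $U\otimes_A G_i$ and $U\otimes_A K$ stay Ding projective over $B$, I can splice these into a Ding projective resolution of $M$. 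Each term is Ding projective by Theorem~\ref{thm: 3.4}, and the length is controlled by $n$; the ``$+1$'' in the first argument accounts for the shift introduced by the extension above.

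The main obstacle I anticipate is the bookkeeping in the upper-bound construction: one must verify that the spliced complex really is exact and that its $n$-th syzygy is Ding projective, rather than merely a term in the resolution. Concretely, I expect the cleanest route is to induct on $\max\{Dpd(M_1),Dpd(M_2)\}$ or to argue directly with the horseshoe-type assembly of the two component resolutions across the short exact sequence of $T$-modules, checking at each stage that Theorem~\ref{thm: 3.4} applies (in particular that the relevant $\varphi$ maps remain monomorphisms, which holds since each $U\otimes_A G_i\to U\otimes_A G_i\oplus 0$ is split mono). Getting the length count exactly right, and confirming that the extra degree of freedom forces only a single shift (hence $Dpd(M_1)+1$ and not more), is where the care is needed; Lemma~\ref{lem: 3.6} is the tool that converts ``having a resolution of length $n$ with Ding projective kernel'' into the dimension bound $Dpd(M)\le n$.
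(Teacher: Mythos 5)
Your lower bound is correct and coincides with the paper's own argument: exactness of sequences of $T$-modules is coordinatewise, and a Ding projective $T$-module $\bigl(\begin{smallmatrix}G_1\\G_2\end{smallmatrix}\bigr)_{\varphi}$ has both $G_1$ and $G_2$ Ding projective, by Theorem~\ref{thm: 3.4} together with Lemma~\ref{lem: 3.7} and the ``in this case'' clause (i.e.\ closure under extensions, \cite[Theorem 2.6]{YLL}); so a length-$m$ Ding projective resolution of $M$ restricts to Ding projective resolutions of $M_1$ and $M_2$. (A quibble: $\textbf{q}$ is right adjoint to $\textbf{p}$ and left adjoint to $\textbf{h}$; what you actually use is only its coordinatewise exactness.)

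The upper bound, however, has a genuine gap. Your splicing is built on the short exact sequence $0\to\bigl(\begin{smallmatrix}M_1\\U\otimes_A M_1\end{smallmatrix}\bigr)\to M\to\bigl(\begin{smallmatrix}0\\M_2/\im(\varphi^M)\end{smallmatrix}\bigr)\to 0$, but in the proof of Theorem~\ref{thm: 3.4} that sequence exists only because $\varphi^M$ is there \emph{assumed} to be a monomorphism (it is part of condition (2)). In Theorem~\ref{thm: 3.8} the module $M$ is arbitrary: for $M=\bigl(\begin{smallmatrix}A\\0\end{smallmatrix}\bigr)$ with $U\neq 0$, the map $\varphi^M$ is not injective and no such sequence exists, so the decomposition you want to splice across is simply not available. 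Even when $\varphi^M$ happens to be injective, your plan needs $Dpd_B(M_2/\im(\varphi^M))\le n$, which you never establish; the hypotheses only control $Dpd(M_2)$, and passing to the quotient requires an extra argument (a bound via $Dpd(U\otimes_A M_1)$ and a two-out-of-three inequality for Ding projective dimension) that is absent from your proposal. The fallback you mention, ``applying $\textbf{p}$,'' also fails as stated, since $\textbf{p}\textbf{q}(M)\neq M$: resolving the coordinates and applying $\textbf{p}$ resolves $\bigl(\begin{smallmatrix}M_1\\(U\otimes_A M_1)\oplus M_2\end{smallmatrix}\bigr)$, not $M$. The paper's proof avoids both problems by a direct relative-horseshoe construction that never refers to $M_2/\im(\varphi^M)$: choose an exact sequence $0\to C_{n-1}\to\cdots\to C_0\to M_1\to 0$ of Ding projective $A$-modules (possible since $Dpd(M_1)\le n-1$), with $f_0:C_0\to M_1$ the last map, and a projective epimorphism $g_0:P_0\to M_2$, then surject $\bigl(\begin{smallmatrix}C_0\\(U\otimes_A C_0)\oplus P_0\end{smallmatrix}\bigr)$ onto $M$ via $h_0(u\otimes c,x)=\varphi^M(u\otimes f_0(c))+g_0(x)$, which is an epimorphism of $T$-modules for \emph{arbitrary} $\varphi^M$. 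Iterating, the first coordinates of the successive kernels are exactly the syzygies of $M_1$ (hence zero after $n$ steps), while the second coordinates are syzygies of $M_2$ with respect to the Ding projective $B$-modules $(U\otimes_A C_i)\oplus P_i$ (Ding projective by Lemma~\ref{lem: 3.7}); since $Dpd(M_2)\le n$, Lemma~\ref{lem: 3.6} makes the final kernel $\bigl(\begin{smallmatrix}0\\K\end{smallmatrix}\bigr)$ Ding projective, and Theorem~\ref{thm: 3.4} then certifies every term and this kernel as Ding projective $T$-modules, giving $Dpd(M)\le n$. Your horseshoe instinct was the right one, but the concrete mechanism you describe does not get off the ground when $\varphi^M$ is not monic.
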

\begin{proof}We first prove that max$\{Dpd(M_{1}),Dpd(M_{2})\}\leq Dpd(M)$.

We may assume that $Dpd(M)=m<\infty$.
There is an exact sequence in $T$-Mod $$0\rightarrow\biggl(\begin{matrix} N^{m}_{1}\\
N^{m}_{2}\end{matrix}\biggr)_{\varphi^{m}}\stackrel{\Small\biggl(\begin{matrix} \partial^{m}_{1}\\
\partial^{m}_{2}\end{matrix}\biggr)}\rightarrow\biggl(\begin{matrix} N^{m-1}_{1}\\
N^{m-1}_{2}\end{matrix}\biggr)_{\varphi^{m-1}}\rightarrow\cdots\rightarrow\biggl(\begin{matrix} N^{0}_{1}\\
N^{0}_{2}\end{matrix}\biggr)_{\varphi^{0}}\stackrel{\Small\biggl(\begin{matrix} \partial^{0}_{1}\\
\partial^{0}_{2}\end{matrix}\biggr)}\rightarrow\biggl(\begin{matrix} M_{1}\\
M_{2}\end{matrix}\biggr)_{\varphi^{M}}\rightarrow 0$$ with each $\biggl(\begin{matrix} N^{i}_{1}\\
N^{i}_{2}\end{matrix}\biggr)_{\varphi^{i}}$ Ding projective. By
Theorem \ref{thm: 3.4}, all $N_{1}^{i}$ and
$N_{2}^{i}/\im(\varphi^{i})$ are Ding projective. So each
$U\otimes_{A} N_{1}^{i}$ is Ding projective by Lemma \ref{lem: 3.7}.
Hence each $N_{2}^{i}$ is Ding projective by Theorem \ref{thm: 3.4}.
Since there exist the exact sequences $0\rightarrow
N_{1}^{m}\stackrel{\Small\partial_{1}^{m}}\rightarrow
N_{1}^{m-1}\rightarrow\cdots\rightarrow
N_{1}^{0}\stackrel{\Small\partial_{1}^{0}}\rightarrow
M_{1}\rightarrow 0$ and $0\rightarrow
N_{2}^{m}\stackrel{\Small\partial_{2}^{m}}\rightarrow
N_{2}^{m-1}\rightarrow\cdots\rightarrow
N_{2}^{0}\stackrel{\Small\partial_{2}^{0}}\rightarrow
M_{2}\rightarrow 0$, we have  $Dpd(M_{1})\leq m$ and $Dpd(M_{2})\leq
m$.

Next we prove that $Dpd(M)\leq$ max$\{Dpd(M_{1})+1,Dpd(M_{2})\}$.

We may assume that max$\{Dpd(M_{1})+1,Dpd(M_{2})\}=n<\infty$. There
exist an exact sequence $0\rightarrow
C_{n-1}\stackrel{f_{n-1}}\rightarrow
C_{n-2}\stackrel{f_{n-2}}\rightarrow\cdots\rightarrow C_{1}
\stackrel{f_{1}}\rightarrow C_{0}\stackrel{f_{0}}\rightarrow
M_{1}\rightarrow 0$ with  each $C_{i}$ a Ding projective left
$A$-module and an exact sequence $P_{0}\stackrel{g_{0}}\rightarrow
M_{2}\rightarrow 0$ with $P_{0}$ a projective left $B$-module. Write
$K_{1}^{i}=\ker(f_{i-1})$ and $\pi_{i}: C_{i}\rightarrow K_{1}^{i}$
to be the obvious epimorphisms, $i=1,2,\cdots,n-1$. Define $h_{0}:
(U\otimes_{A}C_{0})\oplus P_{0}\rightarrow M_{2}$ by $h_{0}(u\otimes
c_{0}, x_{0})=\varphi^{M}(u\otimes f_{0}(c_{0}))+g_{0}(x_{0})$ for
$u\in U, c_{0}\in C_{0}, x_{0}\in P_{0}$. Then $h_{0}$ is clearly an
epimorphism. Thus we get an exact sequence $$0\rightarrow\biggl(\begin{matrix} K^{1}_{1}\\
K^{1}_{2}\end{matrix}\biggr)_{\psi^{1}}\rightarrow\biggl(\begin{matrix} C_{0}\\
 (U\otimes_{A}C_{0})\oplus P_{0}\end{matrix}\biggr)\stackrel{\biggl(\begin{matrix} f_{0}\\
h_{0}\end{matrix}\biggr)}\rightarrow\biggl(\begin{matrix} M_{1}\\
M_{2}\end{matrix}\biggr)_{\varphi^{M}}\rightarrow 0.$$ Also, there
is an exact sequence $P_{1}\stackrel{g_{1}}\rightarrow
K^{1}_{2}\rightarrow 0$ with $P_{1}$  a projective left $B$-module.
Define $h_{1}: (U\otimes_{A}C_{1})\oplus P_{1}\rightarrow K^{1}_{2}$
by $h_{1}(u\otimes c_{1}, x_{1})=\psi^{1}(u\otimes
\pi_{1}(c_{1}))+g_{1}(x_{1})$ for $u\in U, c_{1}\in C_{1}, x_{1}\in
P_{1}$. Since $h_{1}$ is an epimorphism, we get an exact sequence $$0\rightarrow\biggl(\begin{matrix} K^{2}_{1}\\
K^{2}_{2}\end{matrix}\biggr)_{\psi^{2}}\rightarrow\biggl(\begin{matrix} C_{1}\\
(U\otimes_{A}C_{1})\oplus P_{1}\end{matrix}\biggr)\stackrel{\biggl(\begin{matrix} \pi_{1}\\
h_{1}\end{matrix}\biggr)}\rightarrow\biggl(\begin{matrix} K^{1}_{1}\\
K^{1}_{2}\end{matrix}\biggr)_{\psi^{1}}\rightarrow 0.$$
Continuing the process, we get the  exact sequence of left $T$-modules $$0\rightarrow \biggl(\begin{matrix} 0\\
K_{2}^{n-1}\end{matrix}\biggr)\rightarrow\biggl(\begin{matrix} C_{n-1}\\
 (U\otimes_{A}C_{n-1})\oplus P_{n-1}\end{matrix}\biggr)\rightarrow\cdots$$ $$\rightarrow\biggl(\begin{matrix} C_{1}\\
 (U\otimes_{A}C_{1})\oplus P_{1}\end{matrix}\biggr)\rightarrow\biggl(\begin{matrix} C_{0}\\
 (U\otimes_{A}C_{0})\oplus P_{0}\end{matrix}\biggr)\rightarrow\biggl(\begin{matrix} M_{1}\\
M_{2}\end{matrix}\biggr)_{\varphi^{M}}\rightarrow 0.$$

By Lemma \ref{lem: 3.7}, each $U\otimes_{A}C_{i}$ is  Ding
projective and so is $(U\otimes_{A}C_{i})\oplus P_{i}$. Since
$Dpd(M_{2})\leq n$,
$K_{2}^{n-1}$ is Ding projective by Lemma \ref{lem: 3.6}. Thus $\biggl(\begin{matrix} 0\\
K_{2}^{n-1}\end{matrix}\biggr)$ and $\biggl(\begin{matrix} C_{i}\\
 (U\otimes_{A}C_{i})\oplus P_{i}\end{matrix}\biggr)$ are Ding projective by Theorem \ref{thm: 3.4}. So $Dpd(M)\leq n$.
\end{proof}
The following theorem gives an estimate of the left global Ding
projective dimension of a formal triangular matrix ring.
\begin{thm} \label{thm: 3.9}Let $_{B}U\neq 0$ be projective, $U_{A}$ have finite flat dimension. Then
$$max\{lDPD(A),lDPD(B), 1\}\leq lDPD(T)\leq
max\{lDPD(A)+1,lDPD(B)\}.$$
\end{thm}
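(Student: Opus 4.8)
The plan is to prove the two inequalities separately, using Theorem~\ref{thm: 3.8} for the upper bound and the structural Theorem~\ref{thm: 3.4} together with Lemma~\ref{lem: 3.6} for the lower bound; note first that $_{B}U$ projective gives $fd(_{B}U)=0<\infty$, so, since also $fd(U_{A})<\infty$, both results are available. For the upper bound $lDPD(T)\le\max\{lDPD(A)+1,lDPD(B)\}$ I would first dispose of the case $lDPD(A)=\infty$ or $lDPD(B)=\infty$, in which the right-hand side is $\infty$ and there is nothing to prove. So I may assume both are finite; in particular $lDPD(B)<\infty$, which is exactly the hypothesis needed to invoke Theorem~\ref{thm: 3.8}. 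For an arbitrary left $T$-module $M=\binom{M_{1}}{M_{2}}_{\varphi^{M}}$ that theorem gives $Dpd(M)\le\max\{Dpd(M_{1})+1,Dpd(M_{2})\}$; since $Dpd(M_{1})\le lDPD(A)$ and $Dpd(M_{2})\le lDPD(B)$, this yields $Dpd(M)\le\max\{lDPD(A)+1,lDPD(B)\}$, and taking the supremum over all $M$ finishes the upper bound.

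For the lower bound I would establish $1\le lDPD(T)$, $lDPD(A)\le lDPD(T)$ and $lDPD(B)\le lDPD(T)$ in turn. The first is immediate: since $_{B}U\ne 0$ we have $U\otimes_{A}A\cong U\ne 0$, so the structure map of $\binom{A}{0}$ is the non-monic map $U\to 0$, whence $\binom{A}{0}$ is not Ding projective by Theorem~\ref{thm: 3.4} and $lDPD(T)\ge 1$. For the second, the key point is that $Dpd(M_{1})\le Dpd(M)$ for every left $T$-module $M$: if $Dpd(M)=m<\infty$, take a Ding projective resolution $0\to N^{m}\to\cdots\to N^{0}\to M\to 0$ in $T$-Mod; by Theorem~\ref{thm: 3.4} each first component $N^{i}_{1}$ is a Ding projective left $A$-module, and reading off first components gives an exact sequence $0\to N^{m}_{1}\to\cdots\to N^{0}_{1}\to M_{1}\to 0$, so $Dpd(M_{1})\le m$. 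Applying this to $M=\binom{X}{0}$ for an arbitrary left $A$-module $X$ gives $Dpd(X)\le lDPD(T)$, hence $lDPD(A)\le lDPD(T)$.

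The inequality $lDPD(B)\le lDPD(T)$ is the heart of the argument. For a left $B$-module $Y$ I would work with $\binom{0}{Y}$ (zero structure map) and set $m=Dpd(\binom{0}{Y})$, assuming it finite. Choosing a projective resolution $\cdots\to\binom{P^{1}_{1}}{P^{1}_{2}}_{\varphi^{1}}\to\binom{P^{0}_{1}}{P^{0}_{2}}_{\varphi^{0}}\to\binom{0}{Y}\to 0$ in $T$-Mod and letting $\binom{K_{1}}{K_{2}}_{\psi}$ be its $m$-th syzygy, Lemma~\ref{lem: 3.6} makes this syzygy Ding projective, so by Theorem~\ref{thm: 3.4} $\psi$ is monic and $K_{2}/\im(\psi)$ is Ding projective over $B$. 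Passing from the resolution to the complex of cokernels of the structure maps should produce an exact sequence $0\to K_{2}/\im(\psi)\to P^{m-1}_{2}/\im(\varphi^{m-1})\to\cdots\to P^{0}_{2}/\im(\varphi^{0})\to Y\to 0$ in $B$-Mod, in which by Lemma~\ref{lem: 3.3}(1) every $P^{i}_{2}/\im(\varphi^{i})$ is projective (here $_{B}U$ projective also guarantees $U\otimes_{A}P^{i}_{1}$ projective). Lemma~\ref{lem: 3.6} then gives $Dpd(Y)\le m\le lDPD(T)$, and the supremum over $Y$ yields $lDPD(B)\le lDPD(T)$.

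The step I expect to be the main obstacle is the exactness of this cokernel complex: forming cokernels of the $\varphi^{i}$ is only right exact, so I must rule out homology on the left. Applying the long exact homology sequence to the short exact sequence of complexes $0\to U\otimes_{A}C^{\bullet}_{1}\to C^{\bullet}_{2}\to Q^{\bullet}\to 0$, where $C^{\bullet}_{1},C^{\bullet}_{2}$ are the complexes of first and second components and $Q^{\bullet}$ is the cokernel complex, the problem reduces to showing that $U\otimes_{A}(-)$ sends the acyclic complex $C^{\bullet}_{1}:\ 0\to K_{1}\to P^{m-1}_{1}\to\cdots\to P^{0}_{1}\to 0$ to an acyclic complex. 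This is precisely where $fd(U_{A})<\infty$ enters: since $K_{1}$ is Ding projective and the other terms are projective, $\Tor^{A}_{\ge 1}(U,-)$ vanishes on every term, exactly as in the application of \cite[Lemma 2.3]{EIT} in the proof of Theorem~\ref{thm: 3.4}, so the tensored complex remains exact. Everything else is bookkeeping with Theorems~\ref{thm: 3.4} and \ref{thm: 3.8} and Lemma~\ref{lem: 3.6}.
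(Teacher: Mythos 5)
Your proof is correct, and its skeleton is the paper's: the upper bound by quoting Theorem \ref{thm: 3.8} after discarding the infinite case, the witness $\binom{A}{0}$ (non-monic structure map $U\to 0$) for $lDPD(T)\geq 1$, and separate componentwise arguments for $lDPD(A)\leq lDPD(T)$ and $lDPD(B)\leq lDPD(T)$. The genuine divergence is in the step $lDPD(B)\leq lDPD(T)$, where the paper avoids your cokernel-complex analysis entirely by choosing the resolution wisely: for a left $B$-module $N$ it takes a projective resolution $0\to K_m\to P_{m-1}\to\cdots\to P_0\to N\to 0$ in $B$-Mod and applies the functor $Y\mapsto\binom{0}{Y}$; each $\binom{0}{P_i}$ is a projective left $T$-module by Lemma \ref{lem: 3.3}, so Lemma \ref{lem: 3.6} makes the syzygy $\binom{0}{K_m}$ Ding projective, and Theorem \ref{thm: 3.4} immediately gives $K_m$ Ding projective over $B$. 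Your route through an arbitrary $T$-projective resolution of $\binom{0}{Y}$ also works, but note that the step you flag as the main obstacle is in fact trivial, for a reason you did not use: since the first component of $\binom{0}{Y}$ is $0$, your complex $C_1^{\bullet}\colon 0\to K_1\to P_1^{m-1}\to\cdots\to P_1^0\to 0$ is a bounded exact complex whose terms other than $K_1$ are projective and which ends in $\to P_1^0\to 0$; splitting off summands from the right end shows every syzygy, and $K_1$ itself, is projective, so the complex is split exact and \emph{any} additive functor, in particular $U\otimes_A-$, preserves its exactness --- neither $fd(U_A)<\infty$ nor Ding projectivity of $K_1$ is needed here. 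Your Tor-based justification can be made to work, but as stated ("$\Tor^A_{\geq 1}(U,-)$ vanishes on every term, so the tensored complex remains exact") it relies on boundedness: termwise Tor-vanishing propagates to the syzygies only by dimension shifting from the bounded end, and the analogous claim is false for unbounded exact complexes, so that point should be made explicit. A final minor difference: for $lDPD(A)\leq lDPD(T)$ you re-derive $Dpd(M_1)\leq Dpd(M)$ directly from Theorem \ref{thm: 3.4}, whereas the paper cites the lower bound of Theorem \ref{thm: 3.8} (legitimately, since $lDPD(B)\leq m$ has been secured at that point in its proof); these are the same argument, yours merely inlined, with the small advantage of not depending on the hypothesis $lDPD(B)<\infty$.
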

\begin{proof}We first prove that max$\{lDPD(A),lDPD(B), 1\}\leq lDPD(T)$.

We may assume that $lDPD(T)=m<\infty$.
Since $U\neq 0$, $X=\biggl(\begin{matrix} A\\
0\end{matrix}\biggr)$ is not a Ding projective left $T$-module by
Theorem \ref{thm: 3.4}. So $m\geq Dpd(X)\geq 1$.

Let $N$ be any left $B$-module. There exists an exact sequence
$$0\rightarrow K_{m}\rightarrow
P_{m-1}\rightarrow\cdots\rightarrow P_{1} \rightarrow
P_{0}\rightarrow N\rightarrow 0$$ with each $P_{i}$ a projective
left $B$-module. Then we get the exact sequence
$$ 0\rightarrow\biggl(\begin{matrix} 0\\
K_{m}\end{matrix}\biggr)\rightarrow\biggl(\begin{matrix}0\\
P_{m-1}\end{matrix}\biggr)\rightarrow\cdots\rightarrow\biggl(\begin{matrix} 0\\
P_{1}\end{matrix}\biggr)\rightarrow\biggl(\begin{matrix} 0\\
P_{0}\end{matrix}\biggr)\rightarrow\biggl(\begin{matrix} 0\\
N\end{matrix}\biggr)\rightarrow 0.$$ Since $Dpd\biggl(\begin{matrix} 0\\
N\end{matrix}\biggr)\leq lDPD(T)= m$, $\biggl(\begin{matrix} 0\\
K_{m}\end{matrix}\biggr)$ is Ding projective by Lemma \ref{lem:
3.6}. So $K_{m}$ is Ding projective by Theorem \ref{thm: 3.4}. Hence
$Dpd(N)\leq m$. Thus $lDPD(B)\leq m$.

Let $Y$ be any left $A$-module. By Theorem \ref{thm: 3.8}, $Dpd(Y)\leq Dpd\biggl(\begin{matrix} Y\\
0\end{matrix}\biggr)\leq lDPD(T)= m$. Thus $lDPD(A)\leq m$. It
follows that max$\{lDPD(A),lDPD(B), 1\}\leq lDPD(T)$.

Next we prove that $lDPD(T)\leq$ max$\{lDPD(A)+1,lDPD(B)\}$.

We may assume that max$\{lDPD(A)+1,lDPD(B)\}<\infty$. Then
$lDPD(B)<\infty$. By Theorem \ref{thm: 3.8}, for any left $T$-module $M=\biggl(\begin{matrix} M_{1}\\
M_{2}\end{matrix}\biggr)_{\varphi^{M}}$, we have
$$Dpd(M)\leq max\{Dpd(M_{1})+1,Dpd(M_{2})\}\leq
max\{lDPD(A)+1,lDPD(B)\}.$$ So $lDPD(T)\leq$
max$\{lDPD(A)+1,lDPD(B)\}$. This completes the proof.
\end{proof}
\begin{cor} \label{cor: 3.10}Let $R$ be a ring and $T(R)=\biggl(\begin{matrix} R&0\\
R&R \end{matrix}\biggr)$.\begin{enumerate}\item If $lDPD(R)<\infty$ and $M=\biggl(\begin{matrix} M_{1}\\
M_{2}\end{matrix}\biggr)_{\varphi^{M}}$ is a  left $T(R)$-module,
then $$max\{Dpd(M_{1}),Dpd(M_{2})\}\leq Dpd(M)\leq
max\{Dpd(M_{1})+1,Dpd(M_{2})\}.$$\item $max\{lDPD(R), 1\}\leq
lDPD(T(R))\leq lDPD(R)+1.$
\end{enumerate}
\end{cor}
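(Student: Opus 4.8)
The plan is to recognize $T(R)$ as the formal triangular matrix ring $T=\biggl(\begin{matrix} A&0\\ U&B\end{matrix}\biggr)$ in the special case $A=B=R$ and $U={}_{R}R_{R}$, the regular $(R,R)$-bimodule, and then to invoke Theorems \ref{thm: 3.8} and \ref{thm: 3.9} directly. The only point to verify is that this regular bimodule satisfies the flatness and projectivity hypotheses demanded by those theorems, and this is immediate: as a right $R$-module $U_{A}=R_{R}$ is free, so $fd(U_{A})=0<\infty$, and as a left $R$-module $_{B}U={}_{R}R$ is free, hence projective.

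For part (1), I would apply Theorem \ref{thm: 3.8} with $A=B=R$. Its three hypotheses all hold: $lDPD(B)=lDPD(R)<\infty$ by assumption, $U_{A}=R$ has finite (indeed zero) flat dimension, and $_{B}U=R$ is projective. Theorem \ref{thm: 3.8} then yields at once
$$\max\{Dpd(M_{1}),Dpd(M_{2})\}\leq Dpd(M)\leq \max\{Dpd(M_{1})+1,Dpd(M_{2})\}.$$

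For part (2), I would apply Theorem \ref{thm: 3.9}, whose hypotheses $_{B}U\neq 0$ projective and $fd(U_{A})<\infty$ are met exactly as above (note $U=R\neq 0$). Since $A=B=R$, the two-sided bounds of Theorem \ref{thm: 3.9} collapse: the lower bound $\max\{lDPD(A),lDPD(B),1\}$ becomes $\max\{lDPD(R),1\}$, and the upper bound $\max\{lDPD(A)+1,lDPD(B)\}$ becomes $lDPD(R)+1$, giving
$$\max\{lDPD(R),1\}\leq lDPD(T(R))\leq lDPD(R)+1.$$
There is essentially no obstacle here; the entire content is the observation that the regular bimodule is free on both sides, after which both statements are literal specializations of the two preceding theorems. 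If anything, the one step worth stating explicitly is this collapsing of the maxima in the upper bound, which relies only on the trivial inequality $lDPD(R)+1\geq lDPD(R)$.
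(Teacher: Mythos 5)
Your proposal is correct and is exactly the paper's own argument: the paper proves Corollary \ref{cor: 3.10} by the single line ``It follows from Theorems \ref{thm: 3.8} and \ref{thm: 3.9},'' and you have simply made explicit the (trivial) verification that $U={}_RR_R$ is free on both sides, so the hypotheses of those theorems hold and the maxima collapse as you describe.
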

\begin{proof}It follows from Theorems \ref{thm: 3.8} and \ref{thm: 3.9}.
\end{proof}
\begin{rem} \label{rem: 3.11}{\rm Given a left $R$-module $X$, let $Gpd(X)$ denote $\inf\{n$: there
exists an exact sequence $0\rightarrow G_{n}\rightarrow\cdots
\rightarrow G_{1}\rightarrow G_{0}\rightarrow X\rightarrow 0$ of
left $R$-modules with each $G_{i}$ Gorenstein projective\} and call
$Gpd(X)$ the  \emph{Gorenstein projective dimension} of $X$
\cite{H}. If no such $n$ exists, set $Gpd(X)$ = $\infty$. Put
$lGPD(R)$ = sup\{$Gpd(X): X$ is any left $R$-module\} and call
$lGPD(R)$  the \emph{left global Gorenstein projective dimension} of
$R$. Similar to the proofs of Theorems \ref{thm: 3.4}, \ref{thm:
3.8} and \ref{thm: 3.9}, one can obtain that
\begin{enumerate}\item If  $U_{A}$ has finite flat dimension and $_{B}U$ has
finite projective dimension, then a  left $T$-module $M=\biggl(\begin{matrix} M_{1}\\
M_{2}\end{matrix}\biggr)_{\varphi^{M}}$ is Gorenstein projective if
and only if $M_{1}$ is a Gorenstein projective left $A$-module,
$M_{2}/\im(\varphi^{M})$ is a Gorenstein projective left $B$-module
and $\varphi^{M}$ is a monomorphism.
\item If $lGPD(B)<\infty$, $U_{A}$ has finite flat dimension, $_{B}U$ is projective, $M=\biggl(\begin{matrix} M_{1}\\
M_{2}\end{matrix}\biggr)_{\varphi^{M}}$ is a left $T$-module, then
$$max\{Gpd(M_{1}),Gpd(M_{2})\}\leq Gpd(M)\leq max\{Gpd(M_{1})+1,Gpd(M_{2})\}.$$
\item If $U_{A}$ has finite flat dimension and $_{B}U\neq 0$ is projective, then
$$max\{lGPD(A),lGPD(B), 1\}\leq lGPD(T)\leq max\{lGPD(A)+1,lGPD(B)\}.$$
\end{enumerate}}
\end{rem}
\bigskip
\section {\bf Ding injective modules and dimensions}
\bigskip
\begin{lem} \label{lem: 4.1}The following conditions are equivalent:\begin{enumerate}\item
$X$ is a Ding injective right $R$-module. \item There is an exact
sequence $\cdots\rightarrow E^{-1}\rightarrow E^{0}\rightarrow
E^{1}\rightarrow E^{2}\rightarrow \cdots$ of injective right
$R$-modules such that $X=\ker(E^{0}\rightarrow E^{1})$, which is
$\Hom_{R}(G,-)$-exact for each  right $R$-module $G$ with finite
$FP$-injective dimension.
\end{enumerate}
\end{lem}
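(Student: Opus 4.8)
The plan is to mirror the structure of the proof of Lemma \ref{lem: 3.1}, dualizing from the projective/flat setting to the injective/$FP$-injective setting. The implication $(2)\Rightarrow(1)$ is immediate, since every injective right $R$-module is trivially $FP$-injective (as $\Ext_R^1(N,-)$ vanishes on injectives), so exactness after applying $\Hom_R(G,-)$ for all $G$ of finite $FP$-injective dimension specializes to exactness for all injective $G$, which is exactly the defining condition of a Ding injective module. Thus the entire content lies in showing $(1)\Rightarrow(2)$.

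For $(1)\Rightarrow(2)$, I would start from the defining complete injective resolution $\Theta: \cdots\rightarrow E^{-1}\rightarrow E^{0}\rightarrow E^{1}\rightarrow E^{2}\rightarrow \cdots$ of injective right $R$-modules with $X=\ker(E^0\rightarrow E^1)$, which remains $\Hom_R(G,-)$-exact for every $FP$-injective right $R$-module $G$. Now suppose $G$ is an arbitrary right $R$-module with $FP$-$id(G)=n<\infty$. The key step is to resolve $G$ by $FP$-injectives: there is an exact sequence
$$0\rightarrow G\rightarrow G^{0}\rightarrow G^{1}\rightarrow\cdots\rightarrow G^{n}\rightarrow 0$$
with each $G^{i}$ an $FP$-injective right $R$-module. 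Applying the functor $\Hom_R(\Theta,-)$ to this sequence yields an exact sequence of complexes
$$0\rightarrow\Hom_R(\Theta,G)\rightarrow\Hom_R(\Theta,G^{0})\rightarrow\cdots\rightarrow\Hom_R(\Theta,G^{n})\rightarrow 0.$$
Each complex $\Hom_R(\Theta,G^{i})$ is exact because $G^{i}$ is $FP$-injective and $\Theta$ is $\Hom_R(G^{i},-)$-exact by hypothesis. One then invokes \cite[Theorem 6.3]{R} (the same homological tool used in Lemma \ref{lem: 3.1}) to conclude that $\Hom_R(\Theta,G)$ is exact.

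The main obstacle to watch is the existence and correct direction of the $FP$-injective coresolution. Unlike the flat resolution in Lemma \ref{lem: 3.1}, which proceeds leftward by covers, here one needs a finite \emph{right} resolution by $FP$-injectives of length equal to $FP$-$id(G)$; I would obtain this by taking successive $FP$-injective envelopes (or, more elementarily, by embedding into injectives, which are $FP$-injective, and checking that the cosyzygies have strictly smaller $FP$-injective dimension so the process terminates at step $n$). A second point requiring care is the correct variance in applying \cite[Theorem 6.3]{R}: since $\Hom_R(\Theta,-)$ is covariant in the second argument, the short-exact-sequence-of-complexes argument runs with the arrows oriented as displayed above, and the long exact homology sequence forces $\Hom_R(\Theta,G)$ to be acyclic once all the $\Hom_R(\Theta,G^{i})$ are. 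Everything else is the routine bookkeeping already exhibited in the parallel lemma.
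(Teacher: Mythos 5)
Your overall strategy is the right one, and it is exactly what the paper intends (its proof of Lemma \ref{lem: 4.1} simply says it is dual to Lemma \ref{lem: 3.1}): handle $(2)\Rightarrow(1)$ by noting injectives are $FP$-injective, and for $(1)\Rightarrow(2)$ take a finite $FP$-injective coresolution $0\rightarrow G\rightarrow G^{0}\rightarrow\cdots\rightarrow G^{n}\rightarrow 0$ (your construction of this, via embeddings into injectives and the cosyzygy dimension shift, is correct), then pass to an exact sequence of complexes and invoke \cite[Theorem 6.3]{R}.

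However, the central step as you wrote it fails, and precisely at the point you flagged as ``requiring care'': the variance is backwards. The functor to apply is not $\Hom_{R}(\Theta,-)$ but the contravariant functor $\Hom_{R}(-,\Theta)$. Two things go wrong with your version. First, your displayed sequence $0\rightarrow\Hom_{R}(\Theta,G)\rightarrow\Hom_{R}(\Theta,G^{0})\rightarrow\cdots\rightarrow\Hom_{R}(\Theta,G^{n})\rightarrow 0$ is in general not an exact sequence of complexes: its degree-$j$ row is $0\rightarrow\Hom_{R}(E^{j},G)\rightarrow\Hom_{R}(E^{j},G^{0})\rightarrow\cdots$, and exactness of that would require $E^{j}$ to be \emph{projective}, whereas it is injective. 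Second, even granting exactness, the complexes $\Hom_{R}(\Theta,G^{i})$ (maps out of $\Theta$ into $G^{i}$) are not the ones the Ding injective hypothesis controls: condition (1) makes the complexes $\Hom_{R}(G^{i},\Theta)$ (maps from $G^{i}$ into $\Theta$) acyclic, and these are different objects; likewise the lemma's goal is acyclicity of $\Hom_{R}(G,\Theta)$, not of $\Hom_{R}(\Theta,G)$. The correct dual step is: apply $\Hom_{R}(-,\Theta)$ to the coresolution; since every term $E^{j}$ of $\Theta$ is injective, each $\Hom_{R}(-,E^{j})$ is exact, so one obtains the exact sequence of complexes $0\rightarrow\Hom_{R}(G^{n},\Theta)\rightarrow\cdots\rightarrow\Hom_{R}(G^{0},\Theta)\rightarrow\Hom_{R}(G,\Theta)\rightarrow 0$, with $\Hom_{R}(G,\Theta)$ at the quotient end --- contravariance reverses the order, the opposite of what you asserted. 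Each $\Hom_{R}(G^{i},\Theta)$ is acyclic because $G^{i}$ is $FP$-injective, and then \cite[Theorem 6.3]{R} yields acyclicity of $\Hom_{R}(G,\Theta)$. With this correction your argument becomes precisely the paper's proof.
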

\begin{proof}The proof is dual to that of Lemma \ref{lem: 3.1}.
\end{proof}
Recall that $R$ is  a \emph{right coherent ring} \cite{L} if every
finitely generated  right ideal is finitely presented.
\begin{lem} \label{lem: 4.2}Let $A$ and $B$ be right coherent rings, $U_{A}$ be finitely
presented and $_{B}U$ have finite flat dimension. If $G$ is an
$FP$-injective right $A$-module, then the right $B$-module
$\Hom_{A}(U,G)$ has finite FP-injective dimension.
\end{lem}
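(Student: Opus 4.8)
The statement to prove is Lemma 4.2: under the hypotheses that $A$ and $B$ are right coherent, $U_A$ is finitely presented, and $_BU$ has finite flat dimension, the right $B$-module $\Hom_A(U,G)$ has finite $FP$-injective dimension whenever $G$ is $FP$-injective over $A$. The plan is to dualize the argument of Lemma 3.2(1) and replace injectivity/$\Ext$ vanishing by $FP$-injectivity, which requires the right-coherence hypotheses to control finite presentation. Set $fd(_BU)=n<\infty$; the goal is to show $FP$-$id(\Hom_A(U,G))\le n$, i.e. that $\Ext_B^{n+1}(N,\Hom_A(U,G))=0$ for every finitely presented right $B$-module $N$.

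\begin{proof}
Let $fd(_BU)=n<\infty$. It suffices to show $\Ext_B^{n+1}(N,\Hom_A(U,G))=0$ for every finitely presented right $B$-module $N$. The key is the adjunction isomorphism from \cite[p.360]{R}, which gives a natural isomorphism
$$\Ext_B^{n+1}(N,\Hom_A(U,G))\cong\Hom_A(\Tor_{n+1}^B(N,U),G).$$
Since $B$ is right coherent and $N$ is finitely presented, each syzygy of $N$ is again finitely presented, so $\Tor_{n+1}^B(N,U)$ can be computed from a resolution of $N$ by finitely generated projectives; the coherence of $A$ together with the hypothesis that $U_A$ is finitely presented ensures that $\Tor_{n+1}^B(N,U)$ is a finitely presented right $A$-module. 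Now $fd(_BU)=n$ forces $\Tor_{n+1}^B(N,U)=0$, whence the right-hand side vanishes.
\end{proof}

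\textbf{Where the real work lies.} The step I expect to be delicate is justifying that the isomorphism of \cite[p.360]{R} applies and that $\Tor_{n+1}^B(N,U)$ is a finitely presented $A$-module; this is precisely where the right-coherence of both $A$ and $B$ and the finite presentation of $U_A$ are consumed. Once $\Tor_{n+1}^B(N,U)=0$ from the flat-dimension bound, the finite presentation of the Tor-term is strictly speaking unnecessary for the vanishing conclusion, but it is needed to frame the $FP$-injective machinery correctly and to guarantee that no extension problems survive. After establishing the displayed isomorphism, the vanishing is immediate, and the bound $FP$-$id(\Hom_A(U,G))\le n<\infty$ follows directly from the definition of $FP$-injective dimension. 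The only genuine obstacle is verifying the hypotheses of the cited $\Ext$–$\Hom$–$\Tor$ adjunction in this one-sided-coherent setting, which the coherence assumptions are designed to supply.
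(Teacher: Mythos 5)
Your strategy (bounding the $FP$-injective dimension by proving $\Ext_B^{n+1}(N,\Hom_A(U,G))=0$ for every finitely presented right $B$-module $N$) is genuinely different from the paper's and can in fact be made to work, but as written it has a real gap at its only substantive step: the displayed isomorphism
$$\Ext_B^{n+1}(N,\Hom_A(U,G))\cong\Hom_A(\Tor_{n+1}^B(N,U),G)$$
does not follow from \cite[p.360]{R}. That result --- the same one the paper uses to prove Lemma \ref{lem: 3.2}(1) --- requires the coefficient module to be \emph{injective}, because its proof pulls $\Hom_A(-,E)$ past the homology of the complex $P_\bullet\otimes_B U$, which needs $\Hom_A(-,E)$ to be exact. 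For a merely $FP$-injective $G$ the isomorphism is not a formal adjunction fact: taking $B=A$ and $U=A$, it would assert $\Ext_A^i(N,G)=0$ for all $i\geq 1$ and all finitely presented $N$, i.e., it is essentially the sort of statement one is trying to prove. The tell-tale symptom is that your deduction never invokes the $FP$-injectivity of $G$ at all; the hypothesis carrying the whole lemma appears in no step, so the same argument would ``prove'' the conclusion for arbitrary $G$, which is false.

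The missing work can be supplied along your lines. Since $B$ is right coherent and $N$ is finitely presented, take a resolution $P_\bullet\rightarrow N$ by finitely generated projectives; by adjointness, $\Hom_B(P_\bullet,\Hom_A(U,G))\cong\Hom_A(C_\bullet,G)$, where $C_\bullet:=P_\bullet\otimes_B U$ is a complex of finitely presented right $A$-modules (this is where $U_A$ finitely presented enters) which is exact in degrees $>n$ because $fd(_{B}U)=n$. If $f\colon C_{n+1}\rightarrow G$ satisfies $f\circ d_{n+2}=0$, then $f$ kills $\im(d_{n+2})=\ker(d_{n+1})$ and hence factors through $\im(d_{n+1})\subseteq C_n$; the obstruction to extending the induced map $\im(d_{n+1})\rightarrow G$ over $C_n$ lies in $\Ext_A^1(C_n/\im(d_{n+1}),G)$, and $C_n/\im(d_{n+1})$ is finitely presented (a finitely presented module modulo a finitely generated submodule), so $FP$-injectivity of $G$ kills the obstruction and $\Ext_B^{n+1}(N,\Hom_A(U,G))=H^{n+1}(\Hom_A(C_\bullet,G))=0$. (Your emphasis on $\Tor_{n+1}^B(N,U)$ being finitely presented is a red herring: that module is zero; what must be finitely presented is the cokernel $C_n/\im(d_{n+1})$.) For comparison, the paper sidesteps all of this with character modules: $G^+$ is flat by \cite[Theorem 2.2]{F} since $A$ is right coherent, $\Hom_A(U,G)^+\cong U\otimes_A G^+$ since $U_A$ is finitely presented, this module has finite flat dimension by Lemma \ref{lem: 3.2}(2), and right coherence of $B$ converts $fd(\Hom_A(U,G)^+)<\infty$ back into $FP$-$id(\Hom_A(U,G))<\infty$.
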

\begin{proof}Since $A$ is right coherent and $G$ is an
$FP$-injective right $A$-module, $G^{+}$ is a flat left $A$-module
by \cite[Theorem 2.2]{F}. So $fd(U\otimes_{A} G^{+})<\infty$ by
Lemma \ref{lem: 3.2} since $fd(_{B}U)<\infty$. By \cite[Lemma
3.60]{R}, $\Hom_{A}(U,G)^{+}\cong U\otimes_{A} G^{+}$ since $U_{A}$
is finitely presented. Hence $fd(\Hom_{A}(U,G)^{+})=fd(U\otimes_{A}
G^{+})<\infty$. Since $B$ is right coherent,
$FP-id(\Hom_{A}(U,G))=fd(\Hom_{A}(U,G)^{+})<\infty$ by \cite[Theorem
2.2]{F}.
\end{proof}
Let $W=(W_{1}, W_{2})_{\varphi_{W}}$ be a  right $T$-module. Then
$W^{+}=\biggl(\begin{matrix}
W_{1}^{+}\\W_{2}^{+}\end{matrix}\biggr)_{\varphi^{W^{+}}}$ is a
character left $T$-module where $\varphi^{W^{+}}: U\otimes_{A}
W^{+}_{1}\rightarrow W^{+}_{2}$ is defined by
$\varphi^{W^{+}}(u\otimes f)(x)=f(\varphi_{W}(x\otimes u))$ for any
$f\in W_{1}^{+}$, $u \in U$ and $x\in W_{2}$.

\begin{lem} \label{lem: 4.3}Let $W=(W_{1}, W_{2})_{\varphi_{W}}$ be a  right $T$-module.
\begin{enumerate}\item $W$ is an injective right $T$-module if and only if
$W_{1}$  is an injective right $A$-module,
$\ker(\widetilde{{\varphi_{W}}})$  is an injective right $B$-module
and $\widetilde{{\varphi_{W}}}$ is an epimorphism.
\item If $T$ is a right coherent ring and $U_{A}$ is finitely
presented, then $W$ is an $FP$-injective right $T$-module if and
only if  $W_{1}$  is an $FP$-injective right $A$-module,
$\ker(\widetilde{{\varphi_{W}}})$  is an $FP$-injective right
$B$-module and $\widetilde{{\varphi_{W}}}$ is an
epimorphism.\end{enumerate}
\end{lem}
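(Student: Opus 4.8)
The plan is to prove the two parts of Lemma \ref{lem: 4.3} in close analogy with the projective case (Lemma \ref{lem: 3.3}), but now working on the right-hand side where the relevant gluing datum is the adjoint map $\widetilde{\varphi_W}: W_2 \to \Hom_A(U, W_1)$ rather than $\varphi_W$ itself. The key structural fact I would use is that the functor $\textbf{h}$ (right adjoint of $\textbf{q}$) sends injectives to injectives and, under the coherence/finite-presentation hypotheses, $FP$-injectives to $FP$-injectives; dually, $\textbf{p}$ is exact and preserves projectives, so $\Hom_T(\textbf{p}(-), W) \cong \Hom_{A\times B}(-, \textbf{q}W)$ lets me read off injectivity of $W$ from the pair $(W_1, W_2)$ together with the gluing condition. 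Throughout I would pass freely between a right $T$-module $W$ and its character module $W^+$, a left $T$-module, using the duality set up just before the lemma: $W^+ = \bigl(\begin{matrix} W_1^+ \\ W_2^+ \end{matrix}\bigr)$ with the displayed $\varphi^{W^+}$.

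\textbf{Part (1), the injective case.}
First I would observe that a right $T$-module $W$ is injective if and only if its character behaves correctly, or more directly: $W$ is injective iff $\Hom_T(-, W)$ is exact, and by the adjunction $\textbf{p} \dashv \textbf{q}$ on the right-handed side one has $\Hom_T(\textbf{p}(X_1,X_2), W) \cong \Hom_A(X_1, W_1) \oplus \Hom_B(X_2, W_2)$. To get the clean three-condition criterion, I would instead dualize Lemma \ref{lem: 3.3}(1) via $(-)^+$: $W$ is injective iff $W^+$ is a flat (or at least suitably projective-like) left $T$-module is \emph{not} quite the right route, since injectivity does not dualize to flatness cleanly. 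The cleaner path is to cite/reprove directly that $W$ is injective iff $W_1$ is injective over $A$, $\ker(\widetilde{\varphi_W})$ is injective over $B$, and $\widetilde{\varphi_W}$ is an epimorphism, by testing injectivity against the short exact sequences $0 \to (0, B) \to (W_2 \otimes_B U, W_2) \to (W_2 \otimes_B U, 0) \to 0$-type test modules built from $\textbf{p}$. Concretely, the epimorphism condition on $\widetilde{\varphi_W}$ encodes the vanishing of the relevant $\Ext^1$ obstruction between the two components, and the injectivity of $W_1$ and of $\ker(\widetilde{\varphi_W})$ are extracted by restricting $\Hom_T(-,W)$ along the embeddings $\textbf{p}(X_1,0)$ and $(0, X_2)$. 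This part is essentially a known description of injective $T$-modules and I would present it as a short structural argument (or invoke it as standard for formal triangular matrix rings).

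\textbf{Part (2), the $FP$-injective case, and the main obstacle.}
For the $FP$-injective statement I would argue that, because $T$ is right coherent and $U_A$ is finitely presented, a right $T$-module $W$ is $FP$-injective iff $W^+$ is a flat left $T$-module, via the right-coherent characterization $FP\text{-}id_T(W) = fd_T(W^+)$ from \cite[Theorem 2.2]{F}. Then I would apply Lemma \ref{lem: 3.3}(2) to the left $T$-module $W^+$: flatness of $W^+$ is equivalent to $W_1^+$ flat over $A$, $W_2^+/\im(\varphi^{W^+})$ flat over $B$, and $\varphi^{W^+}$ a monomorphism. Translating each of these back through $(-)^+$ and the identity $\Hom_A(U,W_1)^+ \cong U \otimes_A W_1^+$ (valid since $U_A$ is finitely presented, by \cite[Lemma 3.60]{R}, exactly as in Lemma \ref{lem: 4.2}) should convert ``$W_1^+$ flat'' into ``$W_1$ is $FP$-injective over $A$,'' convert the monomorphism condition on $\varphi^{W^+}$ into the \emph{epimorphism} condition on $\widetilde{\varphi_W}$ (the map dualizes and the arrow reverses), and convert flatness of the cokernel $W_2^+/\im(\varphi^{W^+})$ into $FP$-injectivity of $\ker(\widetilde{\varphi_W})$. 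The hard part will be precisely this dictionary step: verifying that the cokernel of $\varphi^{W^+}$ is canonically the character module of $\ker(\widetilde{\varphi_W})$, i.e.\ that applying $(-)^+$ to the exact sequence $0 \to \ker(\widetilde{\varphi_W}) \to W_2 \xrightarrow{\widetilde{\varphi_W}} \Hom_A(U,W_1)$ (when $\widetilde{\varphi_W}$ is onto) yields exactly $U\otimes_A W_1^+ \xrightarrow{\varphi^{W^+}} W_2^+ \to \ker(\widetilde{\varphi_W})^+ \to 0$, with the maps matching up under the finitely-presented isomorphism. Once that identification is pinned down, coherence of $A$ and $B$ lets me pass each componentwise flatness statement back to $FP$-injectivity via \cite[Theorem 2.2]{F}, completing the equivalence.
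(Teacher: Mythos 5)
Your proposal follows essentially the same route as the paper's own proof: part (1) is handled by invoking the known structural description of injective modules over formal triangular matrix rings (the paper cites \cite[Proposition 5.1]{HV2} and \cite[p.956]{AS}), and part (2) is proved exactly as in the paper, namely by combining Fieldhouse's characterization ($W$ is $FP$-injective over the right coherent ring $T$ iff $W^{+}$ is flat), Lemma \ref{lem: 3.3}(2) applied to the left $T$-module $W^{+}$, and the isomorphism $\Hom_{A}(U,W_{1})^{+}\cong U\otimes_{A}W_{1}^{+}$ coming from finite presentation of $U_{A}$, with the commutative-diagram dictionary matching $\varphi^{W^{+}}$ with $(\widetilde{\varphi_{W}})^{+}$ and $W_{2}^{+}/\im(\varphi^{W^{+}})$ with $(\ker(\widetilde{\varphi_{W}}))^{+}$. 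Two small points to fix when writing it up: drop your parenthetical ``when $\widetilde{\varphi_{W}}$ is onto'' --- the identification $\coker(\varphi^{W^{+}})\cong(\ker(\widetilde{\varphi_{W}}))^{+}$ holds unconditionally, since applying $(-)^{+}$ to the left-exact sequence $0\rightarrow\ker(\widetilde{\varphi_{W}})\rightarrow W_{2}\rightarrow\Hom_{A}(U,W_{1})$ always yields a right-exact sequence, and it must be established without assuming surjectivity because that surjectivity is precisely one of the conditions being characterized --- and justify that $A$ and $B$ are right coherent, which follows from right coherence of $T$ (the paper cites \cite[Theorem 4.2]{HMV}).
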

\begin{proof}(1) follows from \cite[Proposition 5.1]{HV2} and \cite[p.956]{AS}.

(2) By \cite[Theorem 4.2]{HMV}, both $A$ and $B$ are right coherent
rings.

The exact sequence
$0\rightarrow\ker(\widetilde{{\varphi_{W}}})\rightarrow
W_{2}\stackrel{\widetilde{{\varphi_{W}}}}\rightarrow \Hom_{A}(U,
W_{1})$ induces the exact sequence
$\Hom_{A}(U,W_{1})^{+}\stackrel{\widetilde{{\varphi_{W}}}^{+}}\rightarrow
W^{+}_{2}\rightarrow(\ker(\widetilde{{\varphi_{W}}}))^{+}\rightarrow
0$. By \cite[Lemma 3.60]{R} or \cite[Lemma 1.2.11]{GT},
$\Hom_{A}(U,W_{1})^{+}\cong U\otimes_{A} W_{1}^{+}$. Then we get the
following commutative diagram with exact rows:
$$\xymatrix{\Hom_{A}(U,W_{1})^{+}\ar[d]_{\cong}\ar[rr]^{\widetilde{{\varphi_{W}}}^{+}}&&W^{+}_{2}\ar@{=}[d]\ar[r]&(\ker(\widetilde{{\varphi_{W}}}))^{+}\ar[d]_{\cong}\ar[r]&0\\
U\otimes_{A}W^{+}_{1}
\ar[rr]^{\varphi^{W^{+}}}&&W^{+}_{2}\ar[r]&W^{+}_{2}/\im(\varphi^{W^{+}})\ar[r]&0.}$$

Since $T$ is a right coherent ring, $W=(W_{1}, W_{2})_{\varphi_{W}}$
is an $FP$-injective right $T$-module if and only if
$W^{+}=\biggl(\begin{matrix}
W_{1}^{+}\\W_{2}^{+}\end{matrix}\biggr)_{\varphi^{W^{+}}}$ is a flat
left $T$-module by \cite[Theorem 2.2]{F} if and only if $W^{+}_{1}$
and  $W^{+}_{2}/\im(\varphi^{W^{+}})$ are flat and $\varphi^{W^{+}}:
U\otimes W^{+}_{1}\rightarrow W^{+}_{2}$ is a monomorphism by Lemma
\ref{lem: 3.3} if and only if $W^{+}_{1}$ and
$(\ker(\widetilde{{\varphi_{W}}}))^{+}$ are flat and
$(\widetilde{{\varphi_{W}}})^{+}: \Hom_{A}(U,W_{1})^{+}\rightarrow
W^{+}_{2}$ is a monomorphism by the diagram above if and only if
$W_{1}$ and $\ker(\widetilde{{\varphi_{W}}})$ are $FP$-injective and
$\widetilde{{\varphi_{W}}}: W_{2}\rightarrow \Hom_{A}(U, W_{1})$ is
an epimorphism by \cite[Theorem 2.2]{F}.
\end{proof}
The following theorem characterizes explicitly the structure of a
Ding injective right $T$-module.
\begin{thm} \label{thm: 4.4}Let $T$ be a right coherent ring, $_{B}U$ have finite flat dimension, $U_{A}$ be finitely presented  and have finite projective or $FP$-injective dimension.
 The following conditions  are equivalent for a right $T$-module
 $W=(W_{1}, W_{2})_{\varphi_{W}}$:\begin{enumerate}\item $W$ is a Ding injective right
$T$-module.\item $W_{1}$ is a Ding injective right $A$-module,
$\ker(\widetilde{{\varphi_{W}}})$ is a Ding injective right
$B$-module and $\widetilde{{\varphi_{W}}}$ is an epimorphism.
\end{enumerate}In this case,  $\Hom_{A}(U,W_{1})$ is Ding injective if and only if
$W_{2}$ is Ding injective.
\end{thm}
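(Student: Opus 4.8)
The plan is to run the dual of the proof of Theorem~\ref{thm: 3.4}, trading projectives for injectives, the functor $U\otimes_{A}-$ for $\Hom_{A}(U,-)$, and flat test modules for $FP$-injective ones, everything being organized around Lemma~\ref{lem: 4.3} (the structure of injective and $FP$-injective right $T$-modules) and Lemma~\ref{lem: 4.1}. Using the right-module analogues of $\textbf{p},\textbf{q},\textbf{h}$, for which $\textbf{p}(G,0)=(G,0)$, $\textbf{h}(I,0)=(I,\Hom_{A}(U,I))$ and $\textbf{h}(0,J)=(0,J)$, the adjunctions ($\textbf{p}$ left adjoint, $\textbf{h}$ right adjoint to $\textbf{q}$) give natural isomorphisms $\Hom_{T}((G,0),-)\cong\Hom_{A}(G,(-)_{1})$, $\Hom_{T}(-,(I,\Hom_{A}(U,I)))\cong\Hom_{A}((-)_{1},I)$ and $\Hom_{T}(-,(0,J))\cong\Hom_{B}((-)_{2},J)$; in addition a direct computation with the triple description of $\mathrm{Mod}$-$T$ yields $\Hom_{T}((0,G),W)\cong\Hom_{B}(G,\ker(\widetilde{\varphi_{W}}))$. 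These four identities are the engine of the argument.

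For $(1)\Rightarrow(2)$ I start from a complete injective resolution $\Gamma:\cdots\to E^{-1}\to E^{0}\to E^{1}\to\cdots$ of $W$ by injective right $T$-modules $E^{i}=(E^{i}_{1},E^{i}_{2})$ with $W=\ker(E^{0}\to E^{1})$; by Lemma~\ref{lem: 4.3}(1) each $E^{i}_{1}$ is injective over $A$, each $\ker(\widetilde{\varphi^{i}})$ is injective over $B$, and each $\widetilde{\varphi^{i}}$ is epi. The first components form an exact complex $\Gamma_{1}$ of injective right $A$-modules with $W_{1}=\ker$. To see $W_{1}$ is Ding injective I test against an $FP$-injective $G$: $\Hom_{A}(G,\Gamma_{1})\cong\Hom_{T}((G,0),\Gamma)$, and $(G,0)$ has finite $FP$-injective dimension over $T$ because its character module $(G,0)^{+}=\binom{G^{+}}{0}$ has finite flat dimension (here $G^{+}$ is flat over $A$, $U\otimes_{A}G^{+}$ has finite flat dimension over $B$ by Lemma~\ref{lem: 3.2}, hence $\binom{G^{+}}{0}$ has finite flat dimension by Lemma~\ref{lem: 3.3}, and $T$ is right coherent, so \cite[Theorem~2.2]{F} applies); thus $\Hom_{T}((G,0),\Gamma)$ is exact by Lemma~\ref{lem: 4.1}, and $W_{1}$ is Ding injective with complete injective resolution $\Gamma_{1}$. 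The crux is now the exactness of $\Hom_{A}(U,\Gamma_{1})$, which I prove by the method of Lemma~\ref{lem: 3.1}: taking a finite projective resolution $P_{\bullet}\to U$ (if $pd(U_{A})<\infty$) or a finite $FP$-injective coresolution $U\to Q^{\bullet}$ (if $FP$-$id(U_{A})<\infty$) and applying $\Hom_{A}(-,E^{i}_{1})$ (exact, as $E^{i}_{1}$ is injective) produces an exact sequence of complexes whose remaining terms $\Hom_{A}(P_{j},\Gamma_{1})$, resp. $\Hom_{A}(Q^{j},\Gamma_{1})$, are exact --- the former since $\Gamma_{1}$ is exact and $P_{j}$ projective, the latter since $\Gamma_{1}$ is a complete injective resolution of the Ding injective module $W_{1}$ and $Q^{j}$ is $FP$-injective; \cite[Theorem~6.3]{R} then forces $\Hom_{A}(U,\Gamma_{1})$ to be exact. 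Granting this, the short exact sequences $0\to\ker(\widetilde{\varphi^{i}})\to E^{i}_{2}\to\Hom_{A}(U,E^{i}_{1})\to0$ assemble into a short exact sequence of complexes $0\to K^{\bullet}\to\Gamma_{2}\to\Hom_{A}(U,\Gamma_{1})\to0$ with $\Gamma_{2}$ and $\Hom_{A}(U,\Gamma_{1})$ exact; hence $K^{\bullet}=(\ker(\widetilde{\varphi^{i}}))$ is exact, and a chase on $0$-cocycles produces an exact sequence $0\to\ker(\widetilde{\varphi_{W}})\to W_{2}\to\Hom_{A}(U,W_{1})\to0$ realizing $\widetilde{\varphi_{W}}$, so $\widetilde{\varphi_{W}}$ is an epimorphism. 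Finally $K^{\bullet}$ is an exact complex of injective right $B$-modules with $\ker(\widetilde{\varphi_{W}})=\ker$, and for $FP$-injective $G$ one has $\Hom_{B}(G,K^{\bullet})\cong\Hom_{T}((0,G),\Gamma)$, which is exact since $(0,G)$ is $FP$-injective over $T$ by Lemma~\ref{lem: 4.3}(2); thus $\ker(\widetilde{\varphi_{W}})$ is Ding injective over $B$.

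For $(2)\Rightarrow(1)$, since $\widetilde{\varphi_{W}}$ is epi there is a short exact sequence of right $T$-modules $0\to(0,\ker(\widetilde{\varphi_{W}}))\to W\to(W_{1},\Hom_{A}(U,W_{1}))\to0$, whose quotient is $\textbf{h}(W_{1},0)$. I show both ends are Ding injective. Picking a complete injective resolution $\Theta$ of the Ding injective $B$-module $\ker(\widetilde{\varphi_{W}})$, the complex $(0,\Theta)$ consists of injective $T$-modules (Lemma~\ref{lem: 4.3}(1)) and satisfies $\Hom_{T}(H,(0,\Theta))\cong\Hom_{B}(H_{2},\Theta)$; for $FP$-injective $H$ the module $H_{2}$ is an extension of $\Hom_{A}(U,H_{1})$ (of finite $FP$-injective dimension by Lemma~\ref{lem: 4.2}) by the $FP$-injective module $\ker(\widetilde{\varphi_{H}})$, so has finite $FP$-injective dimension, whence $\Hom_{B}(H_{2},\Theta)$ is exact and $(0,\ker(\widetilde{\varphi_{W}}))$ is Ding injective. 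Likewise, for a complete injective resolution $\Upsilon$ of the Ding injective $A$-module $W_{1}$, the complex $\textbf{h}(\Upsilon,0)=(\Upsilon,\Hom_{A}(U,\Upsilon))$ consists of injective $T$-modules and is exact (its second component $\Hom_{A}(U,\Upsilon)$ is exact by the two-case argument above), and $\Hom_{T}(H,\textbf{h}(\Upsilon,0))\cong\Hom_{A}(H_{1},\Upsilon)$ is exact for $FP$-injective $H$ because $H_{1}$ is then $FP$-injective over $A$; hence $\textbf{h}(W_{1},0)$ is Ding injective. Since the Ding injective class is closed under extensions (see \cite{YLL, MD}), $W$ is Ding injective. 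The closing assertion follows from the exact sequence $0\to\ker(\widetilde{\varphi_{W}})\to W_{2}\to\Hom_{A}(U,W_{1})\to0$ with Ding injective kernel: by closure of Ding injective modules under extensions and under cokernels of monomorphisms between Ding injective modules, $W_{2}$ is Ding injective if and only if $\Hom_{A}(U,W_{1})$ is.

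The step I expect to be the main obstacle is the exactness of $\Hom_{A}(U,\Gamma_{1})$ --- that applying $\Hom_{A}(U,-)$ to a complete injective resolution of a Ding injective $A$-module again yields an exact complex. This is precisely where the hypothesis that $U_{A}$ be finitely presented with finite projective or $FP$-injective dimension enters, and it must be treated separately in the two cases as above. A secondary technical point is the verification that $(G,0)$ has finite $FP$-injective dimension over $T$, which rests on the right coherence of $T$ and the finiteness of $fd(_{B}U)$ through Lemmas~\ref{lem: 3.2} and~\ref{lem: 3.3} and \cite[Theorem~2.2]{F}.
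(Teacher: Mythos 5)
Your proposal is correct and is precisely the dualization that the paper itself invokes: its entire proof of Theorem \ref{thm: 4.4} is the remark that it is dual to Theorem \ref{thm: 3.4} via Lemmas \ref{lem: 4.1}, \ref{lem: 4.2}, \ref{lem: 4.3} and the extension-closure of Ding injectives from \cite{YLL}, and your argument executes exactly this: the same decomposition $0\to(0,\ker(\widetilde{\varphi_{W}}))\to W\to \textbf{h}(W_{1},0)\to 0$, the same adjunction identities, and the same two-case treatment of $\Hom_{A}(U,-)$ applied to a complete injective resolution. The only (immaterial) deviation is that you establish the finite $FP$-injective dimension of $(G,0)$ over $T$ by the character-module argument $(G,0)^{+}$ together with right coherence of $T$ and \cite[Theorem 2.2]{F}, where the strict dual of the paper's Theorem \ref{thm: 3.4} would instead use the exact sequence $0\to(G,0)\to \textbf{h}(G,0)\to (0,\Hom_{A}(U,G))\to 0$ with Lemma \ref{lem: 4.2} and \cite[Theorem 6.3]{R}; both routes are valid under the stated hypotheses.
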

\begin{proof}The proof is dual to that of Theorem \ref{thm: 3.4} by
using Lemmas \ref{lem: 4.1}, \ref{lem: 4.2}, \ref{lem: 4.3} and
\cite[Theorem 2.8]{YLL}.
\end{proof}
\begin{cor} \label{cor: 4.5}Let $R$ be a right coherent ring, $T(R)=\biggl(\begin{matrix} R&0\\
R&R \end{matrix}\biggr)$ and $W=(W_{1}, W_{2})_{\varphi_{W}}$ be a
right $T(R)$-module. The following conditions are
equivalent:\begin{enumerate}\item $W$ is a Ding injective right
$T(R)$-module.\item $W_{1}$ and $\ker(\widetilde{{\varphi_{W}}})$
are Ding injective right $R$-modules, and
$\widetilde{{\varphi_{W}}}$ is an epimorphism.\item $W_{2}$ and
$\ker(\widetilde{{\varphi_{W}}})$ are Ding injective right
$R$-modules, and $\widetilde{{\varphi_{W}}}$ is an epimorphism.
\end{enumerate}
\end{cor}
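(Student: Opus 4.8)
The plan is to obtain the whole statement from Theorem \ref{thm: 4.4} by specialising to $A=B=R$ and $U=R$, and then to read off the extra equivalence involving $W_{2}$. First I would verify that $T(R)$ meets the hypotheses of that theorem. Here $_{B}U={}_{R}R$ is free, hence flat, so $fd(_{B}U)=0<\infty$; and $U_{A}=R_{R}$ is finitely generated free, hence finitely presented with $pd(U_{A})=0<\infty$. Since $R$ is right coherent, so is $T(R)$ (this is the case $U=R$ of the coherence criterion for triangular matrix rings, which can also be checked directly). Thus all assumptions of Theorem \ref{thm: 4.4} are met, and the equivalence $(1)\Leftrightarrow(2)$ is exactly its conclusion, with $A$ and $B$ both replaced by $R$.

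For the equivalence $(2)\Leftrightarrow(3)$ I would exploit the canonical isomorphism $\Hom_{A}(U,W_{1})=\Hom_{R}(R,W_{1})\cong W_{1}$, under which the morphism $\widetilde{{\varphi_{W}}}\colon W_{2}\to\Hom_{A}(U,W_{1})$ is identified with a map $W_{2}\to W_{1}$. Both (2) and (3) require $\ker(\widetilde{{\varphi_{W}}})$ to be Ding injective and $\widetilde{{\varphi_{W}}}$ to be an epimorphism, so granting these there is a short exact sequence of right $R$-modules
$$0\rightarrow\ker(\widetilde{{\varphi_{W}}})\rightarrow W_{2}\stackrel{\widetilde{{\varphi_{W}}}}{\rightarrow}W_{1}\rightarrow 0,$$
and conditions (2) and (3) then differ only in whether $W_{1}$ or $W_{2}$ is demanded to be Ding injective. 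Since the class of Ding injective right $R$-modules is closed under extensions and under cokernels of monomorphisms by \cite[Theorem 2.8]{YLL}, this sequence shows that $W_{1}$ is Ding injective if and only if $W_{2}$ is: if $W_{1}$ is Ding injective then $W_{2}$ is an extension of two Ding injective modules, while if $W_{2}$ is Ding injective then $W_{1}$ is the cokernel of a monomorphism whose kernel and middle term are both Ding injective. Hence $(2)\Leftrightarrow(3)$.

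I expect no serious obstacle; the one point to state with care is the identification of $\widetilde{{\varphi_{W}}}$ with the canonical map $W_{2}\to W_{1}$. After this, the direction $(2)\Rightarrow(3)$ in fact also drops out of the final clause of Theorem \ref{thm: 4.4} (under the equivalent conditions $\Hom_{A}(U,W_{1})\cong W_{1}$ is Ding injective, forcing $W_{2}$ to be so), and only the reverse passage $(3)\Rightarrow(2)$, which recovers $W_{1}$ as a quotient of $W_{2}$, genuinely requires the closure of Ding injectives under cokernels of monomorphisms.
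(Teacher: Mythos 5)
Your proposal is correct and follows essentially the same route as the paper: the paper's entire proof is to cite the right coherence of $T(R)$ (via \cite{HMV} or \cite{LC}) and then declare the statement an immediate consequence of Theorem \ref{thm: 4.4}, which is exactly your specialisation $A=B=U=R$. The extra detail you supply — identifying $\Hom_{R}(R,W_{1})\cong W_{1}$ and using the closure of Ding injective modules under extensions and cokernels of monomorphisms (\cite[Theorem 2.8]{YLL}) to obtain $(3)\Rightarrow(2)$, which the final clause of Theorem \ref{thm: 4.4} alone does not yield since that clause presupposes the equivalent conditions — is precisely the argument the paper leaves implicit.
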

\begin{proof} By \cite[Corollary 4.5]{HMV} or \cite[Theorem 2.3]{LC}, $T(R)$ is a right coherent ring.
So the result is an immediate consequence of Theorem \ref{thm: 4.4}.
\end{proof}
Next we consider Ding injective dimensions of modules over formal
triangular matrix rings.

Given a right $R$-module $X$, let $Did(X)$ denote $\inf\{n$: there
exists an exact sequence $0\rightarrow X \rightarrow
H^{0}\rightarrow H^{1}\cdots\rightarrow H^{n}\rightarrow 0$ of right
$R$-modules with each $H^{i}$ Ding injective\} and call $Did(X)$ the
\emph{Ding injective dimension} of $X$ \cite{Y}. If no such $n$
exists, set $Did(X)$ = $\infty$. Put $rDID(R)$ = sup\{$Did(W): W$ is
any right $R$-module\}, and call $rDID(R)$ the \emph{right global
Ding injective dimension} of $R$.
\begin{lem} \label{lem: 4.6}The following conditions are
equivalent for a right $R$-module $X$:
\begin{enumerate}\item Did$(X)\leq n$. \item For any  exact sequence
$0\rightarrow X \rightarrow E^{0}\rightarrow
E^{1}\rightarrow\cdots\rightarrow L^{n}\rightarrow 0$ with  each
$E^{i}$  Ding injective, $L^{n}$ is Ding injective.
\end{enumerate}
\end{lem}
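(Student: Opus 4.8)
The plan is to dualize the proof of Lemma~\ref{lem: 3.6}, replacing projective resolutions by injective coresolutions and Ding projective modules by Ding injective modules throughout. As in Lemma~\ref{lem: 3.6}, the substantive implication is $(1)\Rightarrow(2)$, while $(2)\Rightarrow(1)$ is immediate.

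For $(1)\Rightarrow(2)$, assume $Did(X)\le n$, so there is an exact sequence $0\to X\to H^{0}\to\cdots\to H^{n}\to 0$ with every $H^{i}$ Ding injective. Let $0\to X\to E^{0}\to E^{1}\to\cdots\to E^{n-1}\to L^{n}\to 0$ be an arbitrary exact sequence whose terms $E^{0},\dots,E^{n-1}$ are Ding injective; the goal is to show that the $n$-th cosyzygy $L^{n}$ is Ding injective. The class of Ding injective right $R$-modules is closed under extensions, cokernels of monomorphisms, direct products and direct summands by \cite[Theorem~2.8]{YLL}. These are exactly the closure properties required to invoke the dual of \cite[Lemma~3.12]{AB}, which compares two coresolutions of the same module by objects of such a class and shows that their $n$-th cosyzygies are Ding injective simultaneously. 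Since the $n$-th cosyzygy of the coresolution $0\to X\to H^{0}\to\cdots\to H^{n}\to 0$ is $H^{n}$, which is Ding injective, it follows that $L^{n}$ is Ding injective as well.

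For $(2)\Rightarrow(1)$, the implication is clear: every right $R$-module embeds in an injective module, and injective modules are Ding injective, so one may construct an exact sequence $0\to X\to E^{0}\to\cdots\to E^{n-1}\to L^{n}\to 0$ with each $E^{i}$ Ding injective, whereupon (2) forces $L^{n}$ to be Ding injective and hence $Did(X)\le n$. The only point demanding care is the $(1)\Rightarrow(2)$ direction, and within it the single technical step is matching the closure properties from \cite[Theorem~2.8]{YLL} to the hypotheses of the dual of \cite[Lemma~3.12]{AB}; once this comparison-of-coresolutions machinery is available the result is formal, and I anticipate no obstacle beyond this bookkeeping, the argument being the exact mirror of Lemma~\ref{lem: 3.6}.
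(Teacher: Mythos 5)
Your proposal is correct and follows exactly the route the paper takes: the paper's proof of Lemma \ref{lem: 4.6} is simply ``dual to that of Lemma \ref{lem: 3.6}'', and your argument is precisely that dualization, using the closure properties of Ding injective modules from \cite[Theorem 2.8]{YLL} together with the dual of \cite[Lemma 3.12]{AB} to compare the two coresolutions, with the easy direction $(2)\Rightarrow(1)$ handled by embedding into injectives.
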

\begin{proof}The proof is dual to that of Lemma \ref{lem: 3.6}.
\end{proof}
\begin{lem} \label{lem: 4.7}Let $rDID(B)<\infty$, $_{B}U$ be flat, $U_{A}$ have finite projective or $FP$-injective dimension.
If $H$ is a Ding injective right $A$-module, then $\Hom_{A}(U,H)$ is
a Ding injective right $B$-module.
\end{lem}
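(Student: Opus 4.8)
The plan is to dualize the proof of Lemma~\ref{lem: 3.7}, interchanging projective resolutions with injective coresolutions, the functor $U\otimes_A-$ with $\Hom_A(U,-)$, and the hypotheses ``$_BU$ projective, $fd(U_A)<\infty$'' with ``$_BU$ flat, $pd(U_A)<\infty$ or $FP$-$id(U_A)<\infty$''. Since $H$ is Ding injective, I would fix a complete injective coresolution $E:\cdots\to E^{-1}\to E^0\to E^1\to\cdots$ of injective right $A$-modules with $H=\ker(E^0\to E^1)$ which stays exact under $\Hom_A(G',-)$ for every $FP$-injective right $A$-module $G'$, and write $Z^i=\ker(E^i\to E^{i+1})$ for its cosyzygies; reindexing $E$ shows that each $Z^i$ is again a Ding injective right $A$-module. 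Because $_BU$ is flat, $-\otimes_B U$ is exact, so its right adjoint $\Hom_A(U,-)$ sends injectives to injectives; hence $\Hom_A(U,E)$ is a complex of injective right $B$-modules with $\Hom_A(U,H)=\ker(\Hom_A(U,E^0)\to\Hom_A(U,E^1))$.

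The central step is to show $\Hom_A(U,E)$ is exact, equivalently that $\Ext_A^1(U,Z^i)=0$ for all $i$, which forces $\Hom_A(U,-)$ to keep each $0\to Z^i\to E^i\to Z^{i+1}\to 0$ exact. Here I split on the hypothesis on $U_A$. If $pd(U_A)=d<\infty$, then since $\Ext_A^{\geq 1}(U,E^i)=0$ (each $E^i$ is injective), shifting along these short exact sequences gives $\Ext_A^1(U,Z^i)\cong\Ext_A^{d+1}(U,Z^{i-d})=0$. If instead $FP$-$id(U_A)=d<\infty$, I would fix a finite $FP$-injective coresolution $0\to U\to I^0\to\cdots\to I^d\to 0$ and shift in the first variable against the fixed cosyzygy $Z^i$: since $Z^i$ is Ding injective we have $\Ext_A^{\geq 1}(I^j,Z^i)=0$, and the shift yields $\Ext_A^1(U,Z^i)\cong\Ext_A^{d+1}(I^d,Z^i)=0$. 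Either way $\Hom_A(U,E)$ is an exact complex of injective right $B$-modules.

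Finally I would test against $FP$-injective right $B$-modules. For such a $G$, the hypothesis $rDID(B)<\infty$, through the Ding injective analogue of \cite[Proposition 3.2]{DLM}, gives $pd(G)<\infty$, say via a finite projective resolution $0\to P_n\to\cdots\to P_0\to G\to 0$. Applying $\Hom_B(-,\Hom_A(U,E))$ to this resolution yields an exact sequence of complexes $0\to\Hom_B(G,\Hom_A(U,E))\to\Hom_B(P_0,\Hom_A(U,E))\to\cdots\to\Hom_B(P_n,\Hom_A(U,E))\to 0$, since each $\Hom_A(U,E^i)$ is injective. Each term after the first is exact because the $P_j$ are projective and $\Hom_A(U,E)$ is exact, so $\Hom_B(G,\Hom_A(U,E))$ is exact by \cite[Theorem 6.3]{R}. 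By Lemma~\ref{lem: 4.1} this proves that $\Hom_A(U,H)$ is a Ding injective right $B$-module.

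I expect the exactness step in the $FP$-injective case to be the main obstacle: one must first notice that the cosyzygies $Z^i$ are themselves Ding injective in order to run the Ext-shift in the first variable, precisely because finiteness of $FP$-$id(U_A)$ controls $\Ext_A(-,U)$ rather than $\Ext_A(U,-)$. A second point to nail down is the exact form of the Ding injective dual of \cite[Proposition 3.2]{DLM}, namely that $rDID(B)<\infty$ forces every $FP$-injective right $B$-module to have finite projective dimension; finite flat dimension would not suffice, since the reduction in the last paragraph needs $\Hom_B(P_j,-)$ to be exact.
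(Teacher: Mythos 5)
Your proposal is correct and takes essentially the same route as the paper's proof: apply $\Hom_A(U,-)$ to the complete injective resolution (flatness of $_BU$ preserving injectivity), obtain exactness of the resulting complex from the finiteness hypothesis on $U_A$ (the paper cites \cite[Lemma 2.5]{EIT} for the finite projective dimension case and Lemma \ref{lem: 4.1} for the finite $FP$-injective dimension case, where you instead dimension-shift by hand), and then test against $FP$-injective right $B$-modules after reducing them to finite projective dimension. The sub-claim you flag as needing proof is exactly what the paper establishes inline: for $FP$-injective $L$ and any right $B$-module $N$, writing $rDID(B)=n$ and coresolving $N$ by Ding injectives gives $\Ext_B^{n+1}(L,N)\cong\Ext_B^1(L,H^n)=0$ by \cite[Lemma 2.3(1)]{MD}, so $pd(L)\leq n$.
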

\begin{proof}There is an exact sequence of injective right $A$-modules $$\Lambda: \cdots\rightarrow
E^{-1}\rightarrow E^{0}\rightarrow E^{1}\rightarrow E^{2}\rightarrow
\cdots$$ with $H=\ker(E^{0}\rightarrow E^{1})$, which remains exact
after applying $\Hom_{A}(H,-)$ for each $FP$-injective right
$A$-module $H$. Since $_{B}U$ is flat, each $\Hom_{A}(U, E^{i})$ is
injective. By \cite[Lemma 2.5]{EIT} and Lemma \ref{lem: 4.1}, we
obtain the exact sequence
$$\Hom_{A}(U, \Lambda): \cdots\rightarrow \Hom_{A}(U, E^{-1})\rightarrow \Hom_{A}(U,
E^{0})\rightarrow \Hom_{A}(U, E^{1})\rightarrow \cdots$$ of
injective right $B$-modules with $\Hom_{A}(U,H)\cong\ker( \Hom(U,
E^{0})\rightarrow \Hom(U, E^{1}))$.

For each $FP$-injective right $B$-module $L$, we claim that
$pd(L)<\infty$. In fact, let $rDID(B)=n<\infty$, then for any right
$B$-module $N$, there is an exact sequence $0\rightarrow
N\rightarrow H^{0}\rightarrow H^{1}\rightarrow\cdots \rightarrow
H^{n}\rightarrow 0$ with each $H^{i}$ Ding injective. It follows
that $\Ext_{B}^{n+1}(L,N)\cong \Ext_{B}^{1}(L,H^{n})=0$ by
\cite[Lemma 2.3(1)]{MD}. Thus $pd(L)\leq n$. Hence $\Hom_{B}(L,
\Hom_{A}(U,\Lambda))$ is exact by \cite[Lemma 2.5]{EIT}. So
$\Hom_{A}(U,H)$ is a Ding injective right $B$-module.
\end{proof}
\begin{thm} \label{thm: 4.8}Let $T$ be a right coherent ring, $rDID(B)<\infty$,
$_{B}U$ be flat, $U_{A}$ be finitely presented and have finite
projective or $FP$-injective dimension. If $W=(W_{1},
W_{2})_{\varphi_{W}}$ is a right $T$-module, then
$$max\{Did(W_{1}),Did(W_{2})\}\leq Did(W)\leq
max\{Did(W_{1})+1,Did(W_{2})\}.$$
\end{thm}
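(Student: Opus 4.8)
The plan is to run the proof of Theorem \ref{thm: 3.8} dually, replacing Ding projective resolutions by Ding injective coresolutions, the functor $\textbf{p}$ by $\textbf{h}$, and Lemma \ref{lem: 3.7} by Lemma \ref{lem: 4.7}; I treat the two inequalities separately. For the lower bound, suppose $Did(W)=m<\infty$ and fix a coresolution $0\to W\to H^{0}\to\cdots\to H^{m}\to 0$ of right $T$-modules with every $H^{i}=(H^{i}_{1},H^{i}_{2})_{\varphi^{i}}$ Ding injective. Since a sequence in $\Gamma$ is exact exactly when both component sequences are, I read off coresolutions $0\to W_{1}\to H^{0}_{1}\to\cdots\to H^{m}_{1}\to 0$ and $0\to W_{2}\to H^{0}_{2}\to\cdots\to H^{m}_{2}\to 0$. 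By Theorem \ref{thm: 4.4} each $H^{i}_{1}$ is Ding injective over $A$, so Lemma \ref{lem: 4.7} makes each $\Hom_{A}(U,H^{i}_{1})$ Ding injective; the final clause of Theorem \ref{thm: 4.4}, applied to the Ding injective module $H^{i}$, then forces $H^{i}_{2}$ to be Ding injective. Thus both component coresolutions consist of Ding injectives, giving $Did(W_{1})\leq m$ and $Did(W_{2})\leq m$, hence $\max\{Did(W_{1}),Did(W_{2})\}\leq Did(W)$.

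For the upper bound set $n=\max\{Did(W_{1})+1,Did(W_{2})\}$, which I may assume finite, so $Did(W_{1})\leq n-1$ and $Did(W_{2})\leq n$; fix a Ding injective coresolution $0\to W_{1}\to D^{0}\to\cdots\to D^{n-1}\to 0$ over $A$. I build a coresolution of $W$ of length $n$ from the blocks $(D^{i},\Hom_{A}(U,D^{i})\oplus E^{i})$, the right-hand analogue of $\textbf{h}(D^{i},E^{i})$ and the dual of the modules $\binom{C_{i}}{(U\otimes_{A}C_{i})\oplus P_{i}}$ of Theorem \ref{thm: 3.8}; here each $E^{i}$ is an injective right $B$-module into which the successive $B$-cosyzygy embeds, and the structure morphism of such a block is the projection onto $\Hom_{A}(U,D^{i})$, an epimorphism whose kernel is $E^{i}$. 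Because $_{B}U$ is flat and $U_{A}$ has finite projective or $FP$-injective dimension, Lemma \ref{lem: 4.7} shows each $\Hom_{A}(U,D^{i})$ is Ding injective, so each block is Ding injective by Theorem \ref{thm: 4.4}. Dualizing the maps $h_{i}$ of Theorem \ref{thm: 3.8}, the connecting morphism between consecutive blocks has first component the coresolution differential $D^{i}\to D^{i+1}$ and second component built from $\Hom_{A}(U,-)$ of that differential together with the chosen embedding into $E^{i+1}$, arranged so that each step is a short exact sequence of $T$-modules respecting the compatibility $\varphi_{X}(g_{2}\otimes 1)=g_{1}\varphi_{W}$ of $\Gamma$.

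Splicing these short exact sequences yields the coresolution. Since the $A$-coresolution terminates at $D^{n-1}$, its top term has zero $A$-component, i.e. is $(0,Y)_{\varphi}$ for some right $B$-module $Y$, which is Ding injective if and only if $Y$ is, by Theorem \ref{thm: 4.4}. The $B$-component of the construction is a coresolution $0\to W_{2}\to(\Hom_{A}(U,D^{0})\oplus E^{0})\to\cdots\to(\Hom_{A}(U,D^{n-1})\oplus E^{n-1})\to Y\to 0$ of length $n$ whose first $n$ terms are Ding injective right $B$-modules; since $Did(W_{2})\leq n$, Lemma \ref{lem: 4.6} forces $Y$ to be Ding injective. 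Hence $(0,Y)$ and every block are Ding injective, so $Did(W)\leq n$, and the two inequalities give the claim.

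The step I expect to be the main obstacle is this explicit iterative construction in the upper bound: one must define the second components of the connecting morphisms---the duals of the maps $h_{i}$---and verify at once that they are monomorphisms, that they satisfy the commutativity constraint of $\Gamma$, and that the resulting $B$-component sequence is exact with the stated cosyzygies, so that Lemma \ref{lem: 4.6} can be applied at the final step. The injectivity of the $E^{i}$ and the flatness of $_{B}U$---which keeps $\Hom_{A}(U,-)$ exact on the sequences in play, exactly as in the proof of Lemma \ref{lem: 4.7}---are what make this bookkeeping go through.
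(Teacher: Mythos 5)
Your proof is correct and is essentially the paper's own argument: the paper's proof of Theorem \ref{thm: 4.8} is exactly the dualization of Theorem \ref{thm: 3.8} via Theorem \ref{thm: 4.4} and Lemmas \ref{lem: 4.6} and \ref{lem: 4.7}, which is what you carry out (lower bound via the final clause of Theorem \ref{thm: 4.4} plus Lemma \ref{lem: 4.7}; upper bound via the $\textbf{h}$-type blocks $(D^{i},\Hom_{A}(U,D^{i})\oplus E^{i})$ and Lemma \ref{lem: 4.6} applied to the $B$-component). The bookkeeping you flag as the main obstacle is precisely the dual of the explicit $h_{i}$-construction in the proof of Theorem \ref{thm: 3.8}, and it goes through as you describe.
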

\begin{proof}The proof is dual to that of Theorem \ref{thm: 3.8} by using  Theorem \ref{thm: 4.4},
Lemmas \ref{lem: 4.6} and \ref{lem: 4.7}.
\end{proof}
The following theorem gives an estimate of the right global Ding
injective dimension of a formal triangular matrix ring.
\begin{thm} \label{thm: 4.9}Let $T$ be a right coherent ring, $_{B}U\neq 0$ be
flat, $U_{A}$ be finitely presented and have finite projective or
$FP$-injective dimension. Then
$$max\{rDID(A),rDID(B), 1\}\leq rDID(T)\leq
max\{rDID(A)+1,rDID(B)\}.$$
\end{thm}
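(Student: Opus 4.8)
The plan is to mirror the proof of Theorem \ref{thm: 3.9}, dualising each step through Lemma \ref{lem: 4.3}, Theorem \ref{thm: 4.4}, Theorem \ref{thm: 4.8} and Lemma \ref{lem: 4.6}. As there, I would split the statement into the lower bound $\max\{rDID(A),rDID(B),1\}\le rDID(T)$ and the upper bound $rDID(T)\le\max\{rDID(A)+1,rDID(B)\}$, assuming in each case that the bounding quantity is finite (otherwise there is nothing to prove). Note that under the hypotheses of this theorem ($T$ right coherent, $_{B}U$ flat hence of finite flat dimension, $U_{A}$ finitely presented of finite projective or $FP$-injective dimension) both Theorem \ref{thm: 4.4} and Theorem \ref{thm: 4.8} are available, the latter once we know $rDID(B)<\infty$.

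For the lower bound I would set $rDID(T)=m<\infty$ and establish the three inequalities in a carefully chosen order. First, $rDID(T)\ge 1$: since $_{B}U\neq 0$ we have $U_{A}\neq 0$, so for an injective cogenerator $E$ of Mod-$A$ one has $\Hom_{A}(U,E)\neq 0$. The right $T$-module $W=(E,0)$ then has $W_{1}=E$ injective and $\widetilde{\varphi_{W}}\colon 0\to\Hom_{A}(U,E)$, which is \emph{not} an epimorphism; by Theorem \ref{thm: 4.4} $W$ is not Ding injective, so $m\ge Did(W)\ge 1$. Next, $rDID(B)\le m$: given any right $B$-module $N$, take an injective coresolution $0\to N\to E^{0}\to\cdots\to E^{m-1}\to L^{m}\to 0$ and lift it to $0\to(0,N)\to(0,E^{0})\to\cdots\to(0,E^{m-1})\to(0,L^{m})\to 0$, where each $(0,E^{i})$ is injective over $T$ by Lemma \ref{lem: 4.3}(1). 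Since $Did((0,N))\le rDID(T)=m$, Lemma \ref{lem: 4.6} forces $(0,L^{m})$ to be Ding injective, whence $L^{m}$ is Ding injective over $B$ by Theorem \ref{thm: 4.4}; thus $Did(N)\le m$ and $rDID(B)\le m$. Finally, $rDID(A)\le m$: now that $rDID(B)\le m<\infty$, Theorem \ref{thm: 4.8} applies, and for any right $A$-module $Y$ it gives $Did(Y)\le Did((Y,0))\le rDID(T)=m$, so $rDID(A)\le m$. Combining the three gives the lower bound.

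For the upper bound I would assume $\max\{rDID(A)+1,rDID(B)\}<\infty$, which in particular yields $rDID(B)<\infty$ and hence makes Theorem \ref{thm: 4.8} available. Then for an arbitrary right $T$-module $W=(W_{1},W_{2})_{\varphi_{W}}$, Theorem \ref{thm: 4.8} gives $Did(W)\le\max\{Did(W_{1})+1,Did(W_{2})\}\le\max\{rDID(A)+1,rDID(B)\}$, and taking the supremum over $W$ yields $rDID(T)\le\max\{rDID(A)+1,rDID(B)\}$.

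The routine heart of the argument (the dualised bookkeeping) is handled by the already-proved Theorems \ref{thm: 4.4} and \ref{thm: 4.8}, so I expect two delicate points. The first is the $rDID(T)\ge 1$ witness: one must exhibit a concrete non-Ding-injective module, and the cleanest device is to use an injective cogenerator of Mod-$A$ together with the structural criterion of Theorem \ref{thm: 4.4}, relying on $U_{A}\neq 0$ to guarantee $\Hom_{A}(U,E)\neq 0$ and hence failure of the epimorphism condition. The second, and the main obstacle to avoid a circular argument, is the \emph{ordering} within the lower bound: Theorem \ref{thm: 4.8} may only be invoked after $rDID(B)<\infty$ has been secured directly (via injective coresolutions, Lemma \ref{lem: 4.3}(1), Lemma \ref{lem: 4.6} and Theorem \ref{thm: 4.4}), exactly as in the Ding projective case; deriving $rDID(A)\le m$ before $rDID(B)\le m$ would illegitimately presuppose the hypothesis of Theorem \ref{thm: 4.8}.
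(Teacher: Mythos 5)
Your proposal is correct and is precisely the argument the paper intends: its proof of Theorem \ref{thm: 4.9} is just the one-line remark that it is dual to Theorem \ref{thm: 3.9} via Theorems \ref{thm: 4.4}, \ref{thm: 4.8} and Lemma \ref{lem: 4.6}, and your write-up is a faithful expansion of exactly that dualization, including the two genuinely non-mechanical points: replacing the witness $\biggl(\begin{matrix} A\\ 0\end{matrix}\biggr)$ by $(E,0)$ with $E$ an injective cogenerator (so that $\Hom_{A}(U,E)\neq 0$ forces $\widetilde{\varphi_{W}}$ not to be epic), and establishing $rDID(B)\leq m$ before invoking Theorem \ref{thm: 4.8} to get $rDID(A)\leq m$.
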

\begin{proof}The proof is dual to that of Theorem \ref{thm: 3.9} by using Theorems \ref{thm: 4.4}, \ref{thm: 4.8} and Lemma \ref{lem: 4.6}.
\end{proof}
\begin{cor} \label{cor: 4.10}Let $R$ be a right coherent ring and $T(R)=\biggl(\begin{matrix} R&0\\
R&R \end{matrix}\biggr)$.\begin{enumerate}\item If $rDID(R)<\infty$
and $W=(W_{1}, W_{2})_{\varphi_{W}}$ is a  right $T(R)$-module, then
$$max\{Did(W_{1}),Did(W_{2})\}\leq Did(W)\leq
max\{Did(W_{1})+1,Did(W_{2})\}.$$\item $max\{rDID(R), 1\}\leq
rDID(T(R))\leq rDID(R)+1.$
\end{enumerate}
\end{cor}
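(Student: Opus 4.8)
The plan is to obtain Corollary \ref{cor: 4.10} as a direct specialization of Theorems \ref{thm: 4.8} and \ref{thm: 4.9} to the case $A=B=R$ and $U={}_{R}R_{R}$, mirroring how Corollary \ref{cor: 3.10} is deduced from Theorems \ref{thm: 3.8} and \ref{thm: 3.9}. The first step is therefore to check that the standing hypotheses of those two theorems are met for $T(R)$. I would begin by recording that $T(R)$ is a right coherent ring: since $R$ is right coherent, this follows from \cite[Corollary 4.5]{HMV} or \cite[Theorem 2.3]{LC}, exactly as in the proof of Corollary \ref{cor: 4.5}.

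Next I would verify the bimodule conditions for $U=R$. As a left $R$-module, $_{B}U={}_{R}R$ is free, hence flat and nonzero, so the requirements ``$_{B}U$ flat'' and ``$_{B}U\neq 0$ flat'' both hold. As a right $R$-module, $U_{A}=R_{R}$ is free of rank one, hence finitely presented and projective; in particular it has finite projective dimension. Thus every hypothesis on $U$ appearing in Theorems \ref{thm: 4.8} and \ref{thm: 4.9} is satisfied, and both theorems become applicable to $T(R)$.

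For part (1), I would then invoke Theorem \ref{thm: 4.8}. Because $B=R$, the hypothesis $rDID(B)<\infty$ is precisely the assumed $rDID(R)<\infty$, so the theorem applies to any right $T(R)$-module $W=(W_{1},W_{2})_{\varphi_{W}}$ and yields at once
$$max\{Did(W_{1}),Did(W_{2})\}\leq Did(W)\leq max\{Did(W_{1})+1,Did(W_{2})\}.$$
For part (2), I would apply Theorem \ref{thm: 4.9} with $A=B=R$, which gives
$$max\{rDID(R),rDID(R),1\}\leq rDID(T(R))\leq max\{rDID(R)+1,rDID(R)\},$$
and then simplify the two maxima using $max\{rDID(R),rDID(R),1\}=max\{rDID(R),1\}$ and $max\{rDID(R)+1,rDID(R)\}=rDID(R)+1$, producing the stated bounds.

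There is no substantive obstacle here, since the argument is purely a specialization: the real content has already been carried out in Theorems \ref{thm: 4.8} and \ref{thm: 4.9}. The only point demanding even a small justification is the right coherence of $T(R)$, which is what licenses the use of those theorems; everything else reduces to reading off the hypotheses for $U=R$ and simplifying the resulting maxima.
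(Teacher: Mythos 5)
Your proposal is correct and follows essentially the same route as the paper, whose proof of Corollary \ref{cor: 4.10} is simply to invoke Theorems \ref{thm: 4.8} and \ref{thm: 4.9}; you merely make explicit the hypothesis checks (right coherence of $T(R)$ via \cite{HMV} or \cite{LC}, and the flatness, finite presentation and projectivity of $U=R$) that the paper leaves implicit, having already recorded the coherence point in the proof of Corollary \ref{cor: 4.5}.
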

\begin{proof}It follows from Theorems \ref{thm: 4.8} and \ref{thm: 4.9}.
\end{proof}
\begin{rem} \label{rem: 4.11}{\rm Given a right $R$-module $X$, let $Gid(X)$ denote $\inf\{n$: there
is an exact sequence $0\rightarrow X \rightarrow H^{0}\rightarrow
H^{1}\cdots\rightarrow H^{n}\rightarrow 0$ of right $R$-modules with
each $H^{i}$ Gorenstein injective\} and call $Gid(X)$ the
\emph{Gorenstein injective dimension} of $X$ \cite{H}. If no such
$n$ exists, set $Gid(X)$ = $\infty$. Put $rGID(R)$ = sup\{$Gid(X):
X$ is any right $R$-module\}, and call $rGID(R)$ the \emph{right
global Gorenstein injective dimension} of $R$. Similar to Theorems
\ref{thm: 4.4}, \ref{thm: 4.8} and \ref{thm: 4.9}, we have
\begin{enumerate}\item If $_{B}U$ has finite flat
dimension, $U_{A}$ has finite projective or injective dimension,
then a right $T$-module $W=(W_{1}, W_{2})_{\varphi_{W}}$ is
Gorenstein injective if and only if $W_{1}$ is a Gorenstein
injective right $A$-module, $\ker(\widetilde{{\varphi_{W}}})$ is a
Gorenstein injective right $B$-module and
$\widetilde{{\varphi_{W}}}$ is an epimorphism.
\item If $rGID(B)<\infty$, $_{B}U$ is flat, $U_{A}$ has finite projective or injective dimension, $W=(W_{1}, W_{2})_{\varphi_{W}}$ is a right $T$-module, then
$$max\{Gid(W_{1}),Gid(W_{2})\}\leq Gid(W)\leq max\{Gid(W_{1})+1,Gid(W_{2})\}.$$
\item If $_{B}U\neq 0$ is flat, $U_{A}$ has finite projective or injective dimension, then
$$max\{rGID(A),rGID(B), 1\}\leq rGID(T)\leq max\{rGID(A)+1,rGID(B)\}.$$
\end{enumerate}}
\end{rem}
\bigskip
\centerline {\bf ACKNOWLEDGEMENTS}
\bigskip
This research was supported by NSFC (11771202) and Nanjing Institute
of Technology of China (CKJA201707, JCYJ201842).

\end{document}